\selectfont\symbol{60}\fontencoding{\encodingdefault}}
\selectfont\symbol{62}\fontencoding{\encodingdefault}}
\newcommand{\assign}{:=}
\newcommand{\backassign}{=:}
\newcommand{\cdummy}{\cdot}
\newcommand{\nin}{\not\in}
\newcommand{\nosymbol}{}
\newcommand{\tmmathbf}[1]{\ensuremath{\boldsymbol{#1}}}
\newcommand{\tmop}[1]{\ensuremath{\operatorname{#1}}}
\newcommand{\tmstrong}[1]{\textbf{#1}}
\newcommand{\tmtextit}[1]{\text{{\itshape{#1}}}}
\newenvironment{enumerateroman}{\begin{enumerate}[i.] }{\end{enumerate}}
\newenvironment{proof}{\noindent\textbf{Proof\ }}{\hspace*{\fill}$\Box$\medskip}
\newtheorem{definition}{Definition}
\newtheorem{lemma}{Lemma}
\newtheorem{notation}{Notation}
\newtheorem{proposition}{Proposition}
{\theorembodyfont{\rmfamily}\newtheorem{remark}{Remark}}
\newtheorem{theorem}{Theorem}
\newcommand{\zzone}{\text{\resizebox{.7em}{!}{\includegraphics{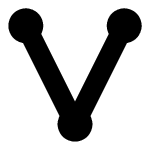}}}}
\newcommand{\zztwo}{\text{\resizebox{.7em}{!}{\includegraphics{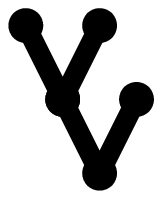}}}}
\numberwithin{equation}{section}
\begin{document}

\title{Finite speed of propagation for the 2- and 3-dimensional multiplicative stochastic wave equation}

\author{
  Immanuel Zachhuber}
\date{\today}

\maketitle

 \begin{abstract} \noindent
 	We prove finite speed of propagation for the multiplicative stochastic
	wave equation in two and three dimensions which leads us to a global
	space-time well-posedness result for the cubic nonlinear equation in the analogue of the energy space.
\end{abstract}
\section{Introduction}

The aim of this paper is to solve the \tmtextit{cubic multiplicative stochastic wave
equation} globally in space and time, which is formally
\begin{eqnarray}
  \partial^2_t u - \Delta u - u \cdummy \xi  &=& - u^3  \text{ on }
  \mathbb{R}_+ \times \mathbb{R}^d  \label{eqn:Wavenaive}\\
  (u, \partial_t u) |_{t = 0} &=& (u_0, u_1) \nonumber
\end{eqnarray}
for $d = 2, 3$ and $\xi$ being spatial white noise, see Section
\ref{sec:noise} for a rigorous definition. This continues the investigation of
this equation from {\cite{GUZ}} where the equation was shown to be globally
well-posed in the periodic setting with data in the energy space. Moreover,
Strichartz estimates were shown to hold in 2 dimensions in
{\cite{mouzardstrichartz}} (following the work
{\cite{zachhuber2019strichartz}} on Strichartz estimates on the Schr{\"o}diger
analogue of the equation considered here). To be precise, the result from
{\cite{GUZ}} is that until any time $T > 0$ one has a unique solution to
\begin{eqnarray}
  \partial^2_t u - H u & = & - u^3  \text{ on } [0, T] \times \mathbb{T}^d 
  \label{eqn:Wavetorus}\\
  (u, \partial_t u) |_{t = 0} & = & (u_0, u_1) \in \mathcal{D} \left( \sqrt{-
  H} \right) \times L^2, \nonumber
\end{eqnarray}
with continuous dependence on the data. Here $H$ denotes the continuous
Anderson Hamiltonian on $\mathbb{T}^d = (\mathbb{R}/\mathbb{Z})^d$, $d = 2,
3$, which is formally
\begin{equation}
  H = \Delta + \xi - \infty \label{Hformal}
\end{equation}
and $\mathcal{D} \left( \sqrt{- H} \right)$ denotes its \tmtextit{form
domain}, i.e. the functions $u \in L^2 (\mathbb{T}^d)$ s.t.
\[ | (u, H u)_{L^2} | < \infty . \]
The operator $H$ which is self-adjoint and semibounded from above was first
constructed on the \ 2 dimensional torus in {\cite{allez_continuous_2015}} and
later in three dimensions in {\cite{GUZ}}, both using the theory of
\tmtextit{Paracontrolled Distributions} which was introduced in {\cite{GIP}}.
There is also a construction using \tmtextit{Regularity Structures} (the
theory due to Hairer {\cite{hairer_theory_2014}}) by Labb{\'e} {\cite{labbe}}.
Since there are additional difficulties that arise in this approach on the
whole space, we do not use it meaning, in particular, that we do not need
either theory of singular SPDE but an exponential transform as in
{\cite{HairerLabbe15,debussche2016schr}} and {\cite{GUZ}} which is enough to
remove the most singular part of the noise.
See also the recent preprint \cite{jape} where a similar trick, however with a space-time noise, was applied to the dynamical $\Phi_4^3$ equation.
The main difficulty to get global space-time solutions as opposed to the
equation \eqref{eqn:Wavetorus} in the periodic setting is that the noise term
appearing is not only \tmtextit{rough} but also \tmtextit{unbounded}. For the
PDE this leads to the conserved energy no longer controlling the relevant
norms which is related to the fact that the Anderson Hamiltonian $H$ on the
whole space will no longer be semibounded as it is on the torus {\cite{GUZ}},
see {\cite{CvZ}} The remedy is to prove that one has \tmtextit{finite speed of
propagation} for this kind of equation, meaning that the solution at a given
space-time point will only depend on the ``backward light cone'' which is
well-known for the classical wave equation, see e.g. {\cite{Evans10}}. This
means that it is enough in some sense to solve ``locally'' which means that
the unboundedness of the noise does not play as much of a role. Due to the
presence of irregular objects, we use the approach due to Tartar
{\cite{tartar}} which was applied to somewhat similar situations in the more
recent works {\cite{BRfinite}} and {\cite{LHfinite}}.

\

Let us mention here also a couple of somewhat related recent papers: In
{\cite{tolomeoglobal}} global space-time solutions to the 2 dimensional
\tmtextit{cubic addititive stochastic nonlinear wave equation}(first solved in
the periodic setting in {\cite{GKO2}}) were constructed using the finite speed
of propagation of the (classical) wave equation together with an argument
based on the I-method; in {\cite{debglobal}} the two dimensional
Schr{\"o}dinger analogue of the equation considered here (first solved on the
torus in {\cite{debussche2016schr}}) was solved on the full space with some
range of power nonlinearities.

\

\

We introduce some notation and conventions which will be used frequently
throughout the paper. For the majority of the article we will be on the three
dimensional euclidean space $\mathbb{R}^3$ ( noting that the two dimensional
case is analogous but simpler) meaning that we often omit it
from our notation, meaning we write e.g.
\[ L^p = L^p (\mathbb{R}^d) = \left\{ f : \mathbb{R}^d \rightarrow
   \mathbb{R}s.t. \| f \|_{L^p} \assign \left( \int_{\mathbb{R}^d} | f |^p
   \right)^{\frac{1}{p}} < \infty \right\} \]
for the Lebesgue spaces (the case $p = \infty$ having the usual modification),
$\mathcal{H}^s =\mathcal{H}^s (\mathbb{R}^d)$ for the \tmtextit{Sobolev
spaces} and $B_{p, q}^s = B_{p, q}^s (\mathbb{R}^d)$ for the Besov spaces, see
the appendix for the definition of these spaces. Whenever this is not the
case, e.g. if we state a result valid on the torus, we demark this explicitly.
For the ball of radius $r > 0$ around $x \in \mathbb{R}^3$ we write $B (x, r)
\assign \{ y \in \mathbb{R}^3 : | y - x | \leqslant r \} $ and $B (r) = B (0,
r) .$ In general we use the convention that generic constants may change from
line to line, we also write
\[ \lesssim \text{ } \Leftrightarrow \text{ } \leqslant \text{ up to a
   generic constant and } C (X) = C_X  \text{for a constant depending on the
   quantity } X \]
and we also frequently write things like
\[ \| f \|_{\mathcal{H}^{s - \varepsilon}} \lesssim 1 \text{ for any }
   \varepsilon > 0 \text{to mean that } \| f \|_{\mathcal{H}^{s -
   \varepsilon}} \leqslant C (\varepsilon)  \text{with } C (\varepsilon)
   \uparrow \infty \text{as } \varepsilon \rightarrow 0. \]
We also frequently write $(\cdummy, \cdummy)$ to mean a dual pairing without
having to specify exactly in which spaces, for example for $f \in
\mathcal{H}^{- \varepsilon}$ for $\varepsilon > 0$ and $g \in \mathcal{H}^1$
we would write
\[ (f, g) = (f, g)_{\mathcal{H}^{- \varepsilon}, \mathcal{H}^{\varepsilon}} \]
etc. In some cases it will be important exactly which pairing it is and then
we will use the latter notation. As is customary, we also write $\chi_A$ as
the indicator function of the set $A \subset \mathbb{R}^d$ i.e.
\[ \chi_A (y) = \left\{\begin{array}{ll}
     1 & \text{ if } y \in A\\
     0 & \text{ if } y \nin A
   \end{array}\right. . \]

The paper is organised as follows:

Section \ref{sec:exp} details how one transforms the equation using the
exponential of a stochastic object as was done in {\cite{GUZ}} in the periodic
setting and how it has to be modified on the whole space. In Section
\ref{sec:locstoc} we recall the localising operators from {\cite{GHglobal}}
and the existence and convergence properties of the stochastic objects
appearing.

In Section \ref{sec:trunc} we solve a suitably renormalised and truncated
version of \eqref{eqn:Wavenaive} globally in space-time, which is analogous to
how the periodic version \eqref{eqn:Wavetorus} was solved globally in time in
{\cite{GUZ}}.

Thereafter, in Section \ref{sec:speed}, we prove finite speed of propagation
for the linear multiplicative stochastic wave equation which is the main tool
of how to globalise the solutions we constructed in Section \ref{sec:trunc}.
Our method is based on an approach due to Tartar {\cite{tartar}}, which we
will also recall.

Lastly, in Section \ref{sec:cubicfinite}, we apply the results from Section
\ref{sec:speed} to the nonlinear equation \eqref{eqn:Wavenaive} in order to
get finite speed of propagation and consequently a global-in-time
well-posedness result.
\subsection*{Acknowledgments}
 The author would like to thank Antoine Mouzard for some helpful comments and Massimiliano Gubinelli for pointing out an alternative approach sketched in Remark \ref{rem:max}. 
\section{The exponential transformation and the noise terms}\label{sec:noise}

In {\cite{GUZ}} the authors used an exponential transform inspired by
{\cite{debussche2016schr}}--which in turn was inspired by
{\cite{HairerLabbe15}}-- in order to remove the worst part of the irregularity
of the noise. It turns out that if one is content with constructing a
form-domain(as we are here) this is sufficient, see Section 2.2 in
{\cite{GUZ}}. In Section \ref{sec:exp} we first recall this construction on
the torus and how to extend it to the whole space, or at least a localised
version of it. Then in Section \ref{sec:locstoc} we recall some results about
localising operators from {\cite{GHglobal}} and the existence and regularity
properties of the relevant stochastic objects.

\subsection{The exponential transform on the torus vs. the whole
space}\label{sec:exp}

We make a similar computation as in Section 2.2 of {\cite{GUZ}} in 3
dimensions with the 2 dimensional case being a bit simpler. Initially we
recall the computation which works on the torus and then detail how one has to
modify it in order to make it work on the whole space.

We start, using the same notation as in {\cite{GUZ}}, with $\xi \in
\mathcal{C}^{- \frac{3}{2} - \varepsilon} (\mathbb{T}^3), \varepsilon > 0$
chosen very small, a \tmtextit{spatial white noise} on $\mathbb{T}^3$, see the
appendix for the definition if the H{\"o}lder-Besov spaces. Then we formally
set (we abuse notation a bit here since we will have to slightly redefine some
of the objects later when working on the whole space)
\begin{align}
  X \assign & (1 - \Delta)^{- 1} \xi (x) && \in \mathcal{C}^{\frac{1}{2} -
  \varepsilon} (\mathbb{T}^3) \\
  X^{\zzone} \assign & (1 - \Delta)^{- 1} | \nabla X |^2 && \in \mathcal{C}^{1
  - \varepsilon} (\mathbb{T}^3)  \label{eqn:X2formal}\\
  X^{\zztwo} \assign & 2 (1 - \Delta)^{- 1} \left( \nabla X \cdot \nabla
  X^{\zzone} \right) && \in \mathcal{C}^{\frac{3}{2} - \varepsilon}
  (\mathbb{T}^3) \\
  W \assign & X + X^{\zzone} + X^{\zztwo} && \in \mathcal{C}^{\frac{1}{2} -
  \varepsilon} (\mathbb{T}^3) 
\end{align}
and make the following ansatz for the form domain of $\Delta + \xi$
\begin{equation}
  u = e^W v = e^{X + X^{\zzone} + X^{\zztwo}} v, \label{ansatz:exp1}
\end{equation}
where the regularity of $v$ will be specified later. We begin by computing
formally
\begin{align*}
  \Delta u + u \xi  = & e^W \left( \Delta v + \Delta W v + \left| \nabla
  \left( X + X^{\zzone} + X^{\zztwo} \right)  \right|^2 v + 2 \nabla W \cdummy
  \nabla v + v \xi \right)\\
   = & e^W \Big( \Delta v + \Bigg( \overset{\text{not
  def.}}{\overbrace{\left| \nabla X^{\zzone} \right|^2}} + \left| \nabla
  X^{\zztwo} \right|^2 + 2 \overset{\text{not def.}}{\overbrace{\nabla X
  \cdummy \nabla X^{\zztwo}}} + 2 \nabla X^{\zzone} \cdummy \nabla X^{\zztwo}
  - X - X^{\zzone} - X^{\zztwo} \Bigg) v + \\  
  & +2 \nabla W \cdummy \nabla v
  \Big),
\end{align*}
so we see some cancellations happening from our choice of $W$. However, the
right-hand side contains some terms which are undefined (in fact, even the
term $| \nabla X |^2$ appearing in \eqref{eqn:X2formal} is not defined),
ultimately one is able to probabilistically give a rigorous meaning to
\begin{eqnarray*}
  : | \nabla X |^2 :  \text{``$=$''}& \left| \nabla X \right|^2 - \infty & \in
  \mathcal{C}^{- 1 - \varepsilon} (\mathbb{T}^3)\\
  : \left| \nabla X^{\zzone} \right|^2 :   \text{``$=$''}& \left| \nabla
  X^{\zzone} \right|^2 - \infty & \in \mathcal{C}^{- \varepsilon}
  (\mathbb{T}^3)\\
  \text{and } & \nabla X \cdummy \nabla X^{\zztwo} & \in \mathcal{C}^{-
  \frac{1}{2} - \varepsilon} (\mathbb{T}^3),
\end{eqnarray*}
for $\varepsilon > 0$, where the colons denote \tmtextit{Wick ordering} which
is a kind of \tmtextit{renormalisation}, see Theorem \ref{thm:noise} for a
rigorous statement. This is the origin of the formal ``$- \infty$'' appearing
in \eqref{Hformal}.

In this way, we can define the operator $\bar{H}$ as the ``correct''
renormalised version of $\Delta + \xi$
\[ \bar{H} (e^W v) \assign e^W \left( \Delta v + \overset{\backassign \bar{Z}
   \in \mathcal{C}^{- \frac{1}{2} - \varepsilon}
   (\mathbb{T}^3)}{\overbrace{\left( : \left| \nabla X^{\zzone} \right|^2 : +
   \left| \nabla X^{\zztwo} \right|^2 + 2 \nabla X \cdummy \nabla X^{\zztwo} +
   2 \nabla X^{\zzone} \cdummy \nabla X^{\zztwo} - W \right)}} v + 2 \nabla W
   \cdummy \nabla v \right), \]
for $W \assign X + X^{\zzone} + X^{\zztwo}$ and the ``corrected'' $X^{\zzone}
\assign (1 - \Delta)^{- 1} : | \nabla X |^2 :$.

Now the observation in {\cite{GUZ}} is that for $v \in \mathcal{H}^1
(\mathbb{T}^3),$ one actually has that
\begin{equation}
  | (e^W v, \bar{H} (e^W v)) | < \infty \Leftrightarrow v \in \mathcal{H}^1
  (\mathbb{T}^3), \label{eqn:Hbarform}
\end{equation}
meaning that $e^W \mathcal{H}^1 (\mathbb{T}^3)$ is the \tmtextit{form domain
}of the operator $\bar{H}$. In fact, by integrating by parts one sees
\begin{eqnarray*}
  (e^W v, \bar{H} (e^W v)) & = & (e^{2 W} v, \Delta v + 2 \nabla W \cdummy
  \nabla v + \bar{Z} v)\\
  & = & (e^{2 W} v, 2 \nabla W \cdummy \nabla v + \bar{Z} v) - (e^{2 W}
  \nabla v, \nabla v) - (e^{2 W} v 2 \nabla W, \nabla v)\\
  & = & (e^{2 W} v, \bar{Z} v) - (e^{2 W} \nabla v, \nabla v)
\end{eqnarray*}
and, since one can show $e^{2 W} \bar{Z} \in \mathcal{C}^{- \frac{1}{2} -
\varepsilon} (\mathbb{T}^3)$ (see Lemma 2.40 in {\cite{GUZ}}) one has that
($C_{\Xi}$ denotes a changing constant which depends only on the H{\"o}lder
norms of the noise objects)
\begin{eqnarray*}
  | (e^W v, \bar{H} (e^W v)) | & \leqslant & \| e^{2 W} \|_{L^{\infty}
  (\mathbb{T}^3)} (\nabla v, \nabla v)_{L^2 (\mathbb{T}^3)} + \| e^{2 W}
  \bar{Z} \|_{\mathcal{C}^{- \frac{1}{2} - \varepsilon} (\mathbb{T}^3)} \| v^2
  \|_{B^{\frac{1}{2} + \varepsilon}_{1, 1} (\mathbb{T}^3)}\\
  & \leqslant & \| e^{2 W} \|_{L^{\infty} (\mathbb{T}^3)} (\nabla v, \nabla
  v)_{L^2 (\mathbb{T}^3)} + \| e^{2 W} \bar{Z} \|_{\mathcal{C}^{- \frac{1}{2}
  - \varepsilon} (\mathbb{T}^3)} \| v \|_{B^{\frac{1}{2} + \varepsilon}_{2, 1}
  (\mathbb{T}^3)} \| v \|_{L^2 (\mathbb{T}^3)}\\
  & \leqslant & C_{\Xi} ((\nabla v, \nabla v)_{L^2 (\mathbb{T}^3)} + (v,
  v)_{L^2 (\mathbb{T}^3)})
\end{eqnarray*}
having used Besov duality, Lemma \ref{lem:leib}, and interpolation/Young's
inequality as well as the fact that both $e^{\pm W}$ are bounded in
$L^{\infty}$. Analogously one can bound the $\mathcal{H}^1 (\mathbb{T}^3)$
norm of $v$ by using
 \begin{align*}
     (\nabla v, \nabla v)_{L^2 (\mathbb{T}^3)} + (v, v)_{L^2 (\mathbb{T}^3)}
     \leqslant & C_{\Xi} (e^W \nabla v, e^W \nabla v)_{L^2 (\mathbb{T}^3)} +
     (e^W v, e^W v)_{L^2 (\mathbb{T}^3)}\\
     = & - C_{\Xi} (e^W v, \bar{H} (e^W v)) - C_{\Xi} (e^{2 W} v, \bar{Z} v) +
     (e^W v, e^W v)_{L^2 (\mathbb{T}^3)}
   \end{align*} 
and proceeding as before to bound the term containing $\bar{Z},$ thus one has
shown \eqref{eqn:Hbarform} and even norm equivalence.

\

\

Now we want to adapt the same approach to the whole space $\mathbb{R}^3$. The
thing that makes the problem on the whole space more difficult is that all the
quantities are unbounded, i.e. live only in weighted H{\"o}lder spaces (see
Definition \ref{def:besov} for the definition of weighted Besov spaces of
which H{\"o}lder spaces are a particular case and Section \ref{sec:locstoc}
for a discussion of how to define the stochastic objects).

In particular it is not good to have unbounded terms inside the exponential,
since we later want to use that the exponential and its inverse are both
bounded in $L^{\infty}$. By making use of the localising operators from
{\cite{GHglobal}} whose definition and properties we recall in Section
\ref{sec:locstoc}, we modify \eqref{ansatz:exp1} in the following way
\begin{equation}
  u = e^{W_{>}} v \label{ansatz:exp2},
\end{equation}
where $W_{>}$ can be thought of capturing the high frequencies and analogously
to the periodic case the stochastic terms are
\begin{align}
  X \assign & (1 - \Delta)^{- 1} \xi (x) & \in \mathcal{C}_{\langle \cdummy
  \rangle^{- \delta}}^{\frac{1}{2} - \varepsilon} \\
  X^{\zzone} \assign & (1 - \Delta)^{- 1} : | \nabla X |^2 : & \in
  \mathcal{C}_{\langle \cdummy \rangle^{- \delta}}^{1 - \varepsilon} \\
  X^{\zztwo} \assign & 2 (1 - \Delta)^{- 1} \left( \nabla X \cdot \nabla
  X^{\zzone} \right) & \in \mathcal{C}_{\langle \cdummy \rangle^{-
  \delta}}^{\frac{3}{2} - \varepsilon} \\
  W \assign & X + X^{\zzone} + X^{\zztwo} & \in \mathcal{C}_{\langle \cdummy
  \rangle^{- \delta}}^{\frac{1}{2} - \varepsilon}  \label{def:W}\\
  = & \underset{\in \mathcal{C}^{\frac{1}{2} - 2
  \varepsilon}}{\underbrace{W_{>}}} + \underset{\in \mathcal{C}_{\langle
  \cdummy \rangle^{\sigma}}^{2 + \varepsilon}}{\underbrace{W_{\leqslant}}} 
  \hspace{1cm}\text{for suitable } \sigma > 0, \text{see Proposition \ref{prop:loc}}&
  \nonumber
\end{align}
and we redo the above formal computation instead with the ansatz
\eqref{ansatz:exp2}
\begin{align*}
  \Delta u + u \xi = & e^{{W_{>}} } (\Delta W_{>} v + | \nabla W_{>}  |^2 v
  + \Delta v + 2 \nabla W_{>} \cdummy \nabla v + v \xi)\\
  = & e^{{W_{>}} } ((\Delta W - \Delta W_{\leqslant}) v + | \nabla W -
  \nabla W_{\leqslant}  |^2 v + \Delta v + 2 \nabla W_{>} \cdummy \nabla v + v
  \xi)\\
  = & e^{{W_{>}} } \Big( \big( W - \Delta W_{\leqslant} - \xi - : |
  \nabla X |^2 : - 2 \nabla X \cdot \nabla X^{\zzone} + | \nabla W |^2 - 2
  \nabla W \cdummy \nabla W_{\leqslant} + | \nabla W_{\leqslant} |^2 + \xi
  \big) v \\& \ + \Delta v + 2 \nabla W_{>} \cdummy \nabla v \Big)\\
  = & e^{{W_{>}} } \Big( \big( W - \Delta W_{\leqslant} + (| \nabla X |^2
  - : | \nabla X |^2 :) + \left| \nabla X^{\zzone} \right|^2 + \left| \nabla
  X^{\zztwo} \right|^2 + 2 \nabla X \cdummy \nabla X^{\zztwo} + 2 \nabla
  X^{\zzone} \cdummy \nabla X^{\zztwo}- \\ &\ - 2 \nabla W \cdummy \nabla
  W_{\leqslant} + | \nabla W_{\leqslant} |^2 \big) v + \Delta v + 2 \nabla
  W_{>} \cdummy \nabla v \Big),
\end{align*}
which leads to the formal definition (the difference is that we replace the
ill-defined squares by their Wick ordered versions)
\begin{eqnarray}
  H (e^{W_{>}} v) & = & e^{{W_{>}} } (\Delta v + Z v + 2 \nabla W_{>} \cdummy
  \nabla v) \nonumber\\
  & = & e^{{W_{>}} } (\Delta v + Z_{>} v + Z_{\leqslant} v + 2 \nabla W_{>}
  \cdummy \nabla v)  \label{eqn:AHwhole}
\end{eqnarray}
having defined
\begin{equation}
  Z \assign  W - \Delta W_{\leqslant} + : \left| \nabla X^{\zzone}
  \right|^2 : + \left| \nabla X^{\zztwo} \right|^2 + 2 \nabla X \cdummy \nabla
  X^{\zztwo} + 2 \nabla X^{\zzone} \cdummy \nabla X^{\zztwo} - 2 \nabla W
  \cdummy \nabla W_{\leqslant} + | \nabla W_{\leqslant} |^2,
\end{equation}
which we have split as
\[ \mathcal{C}_{\langle \cdummy \rangle^{\gamma}}^{- \frac{1}{2} -
   \varepsilon} \ni Z = \underset{\in \mathcal{C}^{- \frac{1}{2} - 2
   \varepsilon}}{\underbrace{Z_{>}}} + \underset{\in L_{\langle \cdummy
   \rangle^{- \gamma'}}^{\infty}}{\underbrace{Z_{\leqslant}}}, \text{for }
   \gamma' > \gamma > 0 \text{ dictated by Proposition \ref{prop:loc}.} \]
We have thus split our formal operator $\Delta + \xi$ into a rough operator
whose treatment is analogous to the periodic case in {\cite{GUZ}} and an
unbounded but regular part. Since we will be interested in finite speed of
propagation which is a local concept, we will consider truncated operators
which are defined rigorously on functions of the form \eqref{ansatz:exp2} as
follows.

\begin{definition}
  \label{def:3ops}We define the following operators for smooth functions $v$
  and $R > 0$
  \begin{eqnarray}
    & H_R (e^{W_{>}} v) &\assign  e^{{W_{>}} } (\Delta v + Z_{>} v + \chi_{B
    (R)} Z_{\leqslant} v + 2 \nabla W_{>} \cdummy \nabla v) \\
    & H_{>} (e^{W_{>}} v) &\assign  e^{{W_{>}} } (\Delta v + Z_{>} v + 2
    \nabla W_{>} \cdummy \nabla v) \\
    && \text{and}  \nonumber\\
    & H_{\gg} (e^{W_{>}} v) &\assign  e^{{W_{>}} } (\Delta v + Z_{>} v -
    C_{\gg} (\Xi) v + 2 \nabla W_{>} \cdummy \nabla v) \\
    & &=  H_{>} (e^{W_{>}} v) - C_{\gg} (\Xi) e^{{W_{>}} } v \\
    & &=  H_R (e^{W_{>}} v) - \chi_{B (0, R)} Z_{\leqslant} v - C_{\gg} (\Xi)
    e^{{W_{>}} } v  \label{eqn:HRH}
  \end{eqnarray}
  where the constant $C_{\gg} (\Xi) > 0$ is chosen depending on the norms of
  the noise terms s.t.
  \begin{equation}
    - (e^{W_{>}} v, H_{\gg} (e^{W_{>}} v)) \geqslant \| e^{W_{>}} v
    \|^2_{L^2}, \label{eqn:Hpos}
  \end{equation}
  this is similar to Proposition 2.53 in {\cite{GUZ}}.
\end{definition}

\begin{lemma}
  \label{lem:3ops}The form domain of all three operators in Definition
  \ref{def:3ops} is $e^{W_{>}} \mathcal{H}^1$ and we have the bounds (note
  that because of \eqref{eqn:Hpos} $H_{\gg}$ is the only operator that has a
  definite sign)
  \begin{eqnarray*}
    | (e^{W_{>}} v, H_{>} (e^{W_{>}} v)) | & \leqslant & - (e^{W_{>}} v,
    H_{\gg} (e^{W_{>}} v))\\
    & \leqslant & - (e^{W_{>}} v, H_{>} (e^{W_{>}} v)) + C_{\gg} (\Xi) \|
    e^{W_{>}} v \|^2_{L^2}\\
     & \text{and} & \\
    | (e^{W_{>}} v, H_R (e^{W_{>}} v)) | & \leqslant & - 2 (e^{W_{>}} v,
    H_{\gg} (e^{W_{>}} v)) + C (R, \Xi) \| e^{W_{>}} v \|^2_{L^2}\\
    - (e^{W_{>}} v, H_{\gg} (e^{W_{>}} v)) & \leqslant & - 2 (e^{W_{>}} v, H_R
    (e^{W_{>}} v)) + C (R, \Xi) \| e^{W_{>}} v \|^2_{L^2},
  \end{eqnarray*}
  where $C_{\gg} (\Xi) > 0$ is the constant from Definition \ref{def:3ops} and
  $C (R, \Xi) > 0$ is a constant that may depend polynomially on $R.$
  
  \begin{proof}
    This follows from the definitions of the operators as well as the previous
    discussion on how to bound the $\bar{Z}$ term as well as the bound
    \[ \| \chi_{B (R)} Z_{\leqslant} v \|_{L^2} \leqslant \| v \|_{L^2} \|
       \chi_{B (R)} Z_{\leqslant} \|_{L^{\infty}} \lesssim \| v \|_{L^2}
       R^{\rho} \| Z_{\leqslant} \|_{L_{\langle \cdummy \rangle^{-
       \rho}}^{\infty}}, \text{ for } \rho > 0. \]
    which is where the polynomial dependence on $R$ comes from.
  \end{proof}
\end{lemma}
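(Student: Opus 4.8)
The plan is to reduce everything to the quadratic form of $H_{>}$ together with a single coercivity estimate, mirroring the torus computation carried out above for $\bar{H}$. First I would integrate by parts exactly as there, using that the drift $2 \nabla W_{>} \cdummy \nabla v$ cancels the cross term produced by $\Delta v$ paired against the weight $e^{2 W_{>}}$, to get
\[ (e^{W_{>}} v, H_{>} (e^{W_{>}} v)) = (e^{2 W_{>}} Z_{>} v, v) - (e^{2 W_{>}} \nabla v, \nabla v) . \]
The key structural point is that both $W_{>} \in \mathcal{C}^{\frac{1}{2} - 2 \varepsilon}$ and $Z_{>} \in \mathcal{C}^{- \frac{1}{2} - 2 \varepsilon}$ live in \emph{unweighted} H\"older--Besov spaces, so that $e^{2 W_{>}} Z_{>} \in \mathcal{C}^{- \frac{1}{2} - 2 \varepsilon}$ globally and $e^{\pm 2 W_{>}}$ are bounded above and below in $L^{\infty}$; this is exactly why the high-frequency part $W_{>}$ was put into the exponential.

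Next I would establish the single coercivity estimate
\[ | (e^{2 W_{>}} Z_{>} v, v) | \leqslant \tfrac{1}{4} (e^{2 W_{>}} \nabla v, \nabla v) + C_{\varepsilon} (\Xi) \| e^{W_{>}} v \|^2_{L^2} \]
following the argument used above for $\bar{Z}$: bound the pairing by Besov duality against $\mathcal{C}^{- \frac{1}{2} - 2 \varepsilon}$, use $\| v^2 \|_{B^{\frac{1}{2} + 2 \varepsilon}_{1, 1}} \lesssim \| v \|_{B^{\frac{1}{2} + 2 \varepsilon}_{2, 1}} \| v \|_{L^2}$ from Lemma \ref{lem:leib}, and absorb the top-order part into the gradient by interpolation and Young's inequality (the small constant $\frac{1}{4}$ costing a large $C_{\varepsilon} (\Xi)$). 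Read forwards this shows the form of $H_{>}$ --- and hence of $H_{\gg}$, which differs only by the bounded term $- C_{\gg} (\Xi) e^{W_{>}} v$, and of $H_R$ --- is finite for $v \in \mathcal{H}^1$; read backwards, exactly as on the torus, it controls $(e^{2 W_{>}} \nabla v, \nabla v)$ by (the modulus of) the form plus a multiple of $\| e^{W_{>}} v \|^2_{L^2}$, so that for $u = e^{W_{>}} v \in L^2$ finiteness of the form forces $\nabla v \in L^2$, i.e. $v \in \mathcal{H}^1$. This identifies the common form domain $e^{W_{>}} \mathcal{H}^1$.

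For $H_R$ the only new term is the localised potential, for which I would use the crude bound from the statement,
\[ | (e^{2 W_{>}} \chi_{B (R)} Z_{\leqslant} v, v) | \leqslant \| e^{2 W_{>}} \|_{L^{\infty}} \| \chi_{B (R)} Z_{\leqslant} \|_{L^{\infty}} \| v \|^2_{L^2} \lesssim R^{\rho} \| Z_{\leqslant} \|_{L_{\langle \cdummy \rangle^{- \rho}}^{\infty}} \| e^{W_{>}} v \|^2_{L^2}, \]
which is the source of the polynomial factor $C (R, \Xi)$. Writing $G \assign (e^{2 W_{>}} \nabla v, \nabla v)$ and $N \assign \| e^{W_{>}} v \|^2_{L^2}$, the forms of $H_{>}, H_{\gg}, H_R$ all read $\pm G + (\text{rough}) \, (+ \, \text{local})$, and I would finish by pure bookkeeping. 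Choosing $C_{\gg} (\Xi)$ large enough (so that it dominates $C_{\varepsilon} (\Xi)$) first yields the positivity \eqref{eqn:Hpos} and the two-sided comparison $- (e^{W_{>}} v, H_{\gg} (e^{W_{>}} v)) \sim G + N$, in particular $G \lesssim - (e^{W_{>}} v, H_{\gg} (e^{W_{>}} v))$; inserting the coercivity and localised estimates into each form then gives the stated inequalities, with the slack factor $2$ in the $H_R$ bounds being precisely what converts the $\tfrac{5}{4} G$ appearing in $| (e^{W_{>}} v, H_R (e^{W_{>}} v)) |$ into $- 2 (e^{W_{>}} v, H_{\gg} (e^{W_{>}} v))$.

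The only genuinely analytic ingredient is the coercivity estimate for the rough quadratic form $(e^{2 W_{>}} Z_{>} v, v)$, which I expect to be the main obstacle: it is where the negative regularity of the noise is felt and where the paraproduct/duality structure (and, on $\mathbb{R}^3$, the fact that $e^{2 W_{>}} Z_{>}$ is unweighted) must be used. Everything else is the integration by parts, the crude $L^{\infty}$ bound on $B (R)$, and elementary algebra in $G$ and $N$.
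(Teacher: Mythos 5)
Your proposal is correct and follows essentially the same route as the paper's (very terse) proof, which simply points back to the torus computation for $\bar{Z}$: integrate by parts so the drift $2\nabla W_{>}\cdummy\nabla v$ cancels the cross term, bound the rough quadratic form by Besov duality together with Lemma \ref{lem:leib} and interpolation/Young, and use the crude bound $\|\chi_{B(R)}Z_{\leqslant}\|_{L^{\infty}}\lesssim R^{\rho}\|Z_{\leqslant}\|_{L_{\langle\cdummy\rangle^{-\rho}}^{\infty}}$ for the polynomial $R$-dependence. The one point to phrase more carefully is that $e^{2W_{>}}Z_{>}\in\mathcal{C}^{-\frac{1}{2}-\varepsilon}$ does not follow from unweightedness alone (the H\"older regularities sum to $-4\varepsilon<0$, so the product is not classically defined); it is the stochastic input of Lemma \ref{lem:stocprod} (Lemma 2.40 in \cite{GUZ}), which is exactly what the paper's ``previous discussion'' invokes.
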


For the sake of completeness, we give the analogous statements in the
two-dimensional setting. It is enough to consider one renormalised quantity,
namely we have for the stochastic objects, $\xi^{(2)} \in \mathcal{C}_{\langle
\cdummy \rangle^{- \delta}}^{- 1 - \varepsilon} (\mathbb{R}^2)$ being the
spatial white noise on $\mathbb{R}^2,$
\begin{align*}
   X^{(2)} \assign (1 - \Delta)^{- 1} \xi^{(2)} & \in \mathcal{C}_{\langle
  \cdummy \rangle^{- \delta}}^{1 - \varepsilon} (\mathbb{R}^2) \\
   : | \nabla X^{(2)} |^2 : & \in \mathcal{C}_{\langle \cdummy \rangle^{-
  \delta}}^{- \varepsilon} (\mathbb{R}^2), 
\end{align*}
for $\varepsilon, \delta > 0$, which leads to the exponential ansatz $u =
e^{X^{(2)}_{>}} v$ with $v \in \mathcal{H}^1 (\mathbb{R}^2)$ for which one
formally has
\begin{eqnarray*}
  \Delta u + u \xi & = & e^{{X^{(2)}_{>}} } \left( \Delta X^{(2)}_{>} v +
  \left| {\nabla X^{(2)}_{>}}  \right|^2 v + \Delta v + 2 \nabla X^{(2)}_{>}
  \cdummy \nabla v + v \xi^{(2)} \right)\\
  & = & e^{{X^{(2)}_{>}} } ((\Delta X^{(2)} - \Delta X^{(2)}_{\leqslant}) v +
  | \nabla X^{(2)} - \nabla X^{(2)}_{\leqslant} |^2 v + \Delta v + 2 \nabla
  X^{(2)}_{>} \cdummy \nabla v + v \xi^{(2)})\\
  & = & e^{{X^{(2)}_{>}} } ((X^{(2)} - \Delta X^{(2)}_{\leqslant} + | \nabla
  X^{(2)} |^2 - 2 \nabla X^{(2)} \nabla X^{(2)}_{\leqslant} + | \nabla
  X^{(2)}_{\leqslant} |^2) v + \Delta v + 2 \nabla X^{(2)}_{>} \cdummy \nabla
  v)
\end{eqnarray*}
leading to the rigorous definition
\[ H^{(2)} (e^{X^{(2)}_{>}} v) \assign e^{X^{(2)}_{>}} (Z^{(2)} v + \Delta v
   + 2 \nabla X^{(2)}_{>} \cdummy \nabla v), \]
having defined
\[ Z^{(2)} \assign (X^{(2)} - \Delta X^{(2)}_{\leqslant} + | : \nabla X^{(2)}
   : |^2 - 2 \nabla X^{(2)} \nabla X^{(2)}_{\leqslant} + | \nabla
   X^{(2)}_{\leqslant} |^2) = \underset{\in \mathcal{C}^{- 2 \varepsilon}
   (\mathbb{R}^2)}{\underbrace{Z_{>}^{(2)}}} + \underset{\in L_{\langle
   \cdummy \rangle^{- \gamma}}^{\infty}
   (\mathbb{R}^2)}{\underbrace{Z_{\leqslant}^{(2)}}} \]
for some $\gamma > 0$ dictated by Proposition \ref{prop:loc}. This leads to
the truncated operator
\[ H^{(2)}_R (e^{X^{(2)}_{>}} v) \assign e^{X^{(2)}_{>}} (Z_{>}^{(2)} v +
   \chi_{B (R)} Z_{\leqslant}^{(2)} + \Delta v + 2 \nabla X^{(2)}_{>} \cdummy
   \nabla v) \]
and the uniformly positive operator
\begin{eqnarray}
  H^{(2)}_{\gg} (e^{X^{(2)}_{>}} v) \assign & e^{X^{(2)}_{>}} (\Delta v +
  X^{(2)}_{>} v - C_{\gg} (\Xi^{(2)}) v + 2 \nabla X^{(2)}_{>} \cdummy \nabla
  v), 
\end{eqnarray}
where $C_{\gg} (\Xi^{(2)}) > 0$ is a constant depending on the norms of the
noise terms s.t.
\[ \| e^{X^{(2)}_{>}} v \|^2_{L^2 (\mathbb{R}^2)} \leqslant -
   (e^{X^{(2)}_{>}} v, H^{(2)}_{\gg} (e^{X^{(2)}_{>}} v)) . \]
\subsection{The localising operators and the stochastic
terms}\label{sec:locstoc}

Due to the unbounded nature of $\xi$ on $\mathbb{R}^d$, namely it lives only
in a weighted space $\mathcal{C}_{\langle \cdummy \rangle^{- \delta}}^{-
\frac{d}{2} - \varepsilon}$--see Definition \ref{def:besov} for the definition
of (weighted) H{\"o}lder-Besov spaces, we do not proceed directly as on the
torus. Instead we use the decomposition from Gubinelli-Hofmanova
{\cite{GHglobal}} of the noise terms into two parts
\[ \mathcal{C}_{\langle \cdummy \rangle^{- \delta}}^{- \sigma} \ni \Xi =
   \Xi_{\leqslant} + \Xi_{>}  \text{ where } \Xi_{\leqslant} \in L_{\langle
   \cdummy \rangle^{^{- \delta'}}}^{\infty} \text{ and } \Xi_{>} \in
   \mathcal{C}^{- \sigma'}, \text{for } \sigma' > \sigma > 0 \text{ and some }
   \delta' > \delta > 0 \]
i.e. we obtain a part $\Xi_{\leqslant}$ which is regular but unbounded and a
part $\Xi_{>} $which is irregular but ``bounded''. We recall the localisation
operators $\mathcal{U}_{>}$ and $\mathcal{U}_{\leqslant}$ from
{\cite{GHglobal}} defined as
\begin{equation}
  \mathcal{U}_{>} f \assign \underset{k}{\sum} w_k \Delta_{> L_k} f \qquad
  \mathcal{U}_{\leqslant} f \assign \underset{k}{\sum} w_k \Delta_{\leqslant
  L_k} f,
\end{equation}
where $(w_k)$ is a smooth dyadic partition of unity and $\Delta_{> L_k}$ and
$\Delta_{\leqslant L_k}$ are projections on frequencies higher or lower than
$L_k$ respectively. The following is a result from {\cite{GHglobal}} which we
will apply liberally.

\begin{proposition}[Localisation operators, {\cite{GHglobal}}]
  \label{prop:loc}Let $L > 0,$ then there exists a choice of parameters
  $(L_k)$ such that for all $\alpha, \delta, \gamma > 0$ and $a, b \in
  \mathbb{R}$
  \[ \| \mathcal{U}_{>} f \|_{\mathcal{C}_{\langle \cdummy \rangle^a}^{-
     \alpha - \delta}} \lesssim 2^{- \delta L} \| f \|_{\mathcal{C}_{\langle
     \cdummy \rangle^{a - \delta}}^{- \alpha}} \quad \text{and} \qquad \|
     \mathcal{U}_{\leqslant} f \|_{\mathcal{C}_{\langle \cdummy \rangle^{-
     b}}^{- \alpha + \gamma}} \lesssim 2^{\gamma L} \| f
     \|_{\mathcal{C}_{\langle \cdummy \rangle^{- b + \gamma}}^{- \alpha}} \]
\end{proposition}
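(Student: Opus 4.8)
The plan is to localise everything to the dyadic annuli cut out by the partition of unity $(w_k)$ and, on each annulus, to play the frequency gain or loss against the polynomial weight. First I would record the two elementary Littlewood--Paley estimates that carry the whole argument. For the high-frequency projector, since every dyadic block of frequency $\gtrsim 2^{L_k}$ gains a factor at most $2^{-\delta L_k}$ when the regularity index is lowered by $\delta$, one has $\| \Delta_{>L_k} f \|_{\mathcal{C}^{-\alpha-\delta}} \lesssim 2^{-\delta L_k} \| f \|_{\mathcal{C}^{-\alpha}}$; dually, raising the index by $\gamma$ on frequencies $\leqslant 2^{L_k}$ costs at most $2^{\gamma L_k}$, so $\| \Delta_{\leqslant L_k} f \|_{\mathcal{C}^{-\alpha+\gamma}} \lesssim 2^{\gamma L_k} \| f \|_{\mathcal{C}^{-\alpha}}$. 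Next I would invoke a weighted-Besov localisation lemma of the kind established in \cite{GHglobal}: because the $(w_k)$ have uniformly bounded overlap and $w_k$ is, after rescaling, a fixed smooth bump, and because $\langle x \rangle \sim 2^k$ on $\mathrm{supp}\, w_k$, the weighted norm $\| \,\cdummy\, \|_{\mathcal{C}^s_{\langle \cdummy \rangle^a}}$ is comparable to $\sup_k 2^{ak} \| w_k \,\cdummy\, \|_{\mathcal{C}^s}$. Applying this in both directions --- to split the output sum and to recombine the localised inputs --- reduces each of the two inequalities to a single per-$k$ estimate.

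For $\mathcal{U}_{>}$ the $k$-th contribution is, schematically, $2^{ak} \| w_k \Delta_{>L_k} f \|_{\mathcal{C}^{-\alpha-\delta}}$, which by the high-frequency estimate and the fact that on $\mathrm{supp}\, w_k$ the input weight reads $\langle x \rangle^{a-\delta} \sim 2^{(a-\delta)k}$ is controlled by $2^{ak}\, 2^{-\delta L_k}\, 2^{-(a-\delta)k} \| f \|_{\mathcal{C}^{-\alpha}_{\langle \cdummy \rangle^{a-\delta}}} = 2^{\delta(k-L_k)} \| f \|_{\mathcal{C}^{-\alpha}_{\langle \cdummy \rangle^{a-\delta}}}$. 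Symmetrically, the $k$-th contribution to the $\mathcal{U}_{\leqslant}$ bound comes out as $2^{\gamma(L_k-k)} \| f \|_{\mathcal{C}^{-\alpha}_{\langle \cdummy \rangle^{-b+\gamma}}}$. The decisive observation is that in each inequality the gap between input and output weights has been arranged to equal exactly the regularity gap ($\delta$, respectively $\gamma$), so both per-$k$ factors depend on $k$ only through $k-L_k$. Choosing $L_k = L+k$ forces $k-L_k \equiv -L$, whereupon the two contributions collapse to $2^{-\delta L}$ and $2^{\gamma L}$, uniformly in $k$; taking the supremum over $k$ in the localisation lemma then yields precisely the claimed bounds. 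Notably, this single sequence $(L_k)$ serves all values of $\alpha, \delta, \gamma, a, b$ at once, because the balance $2^{\delta(k-L_k)} = 2^{-\delta L}$ holds for every $\delta$ as soon as $L_k - k$ is the constant $-L$.

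The step I expect to be the main obstacle is justifying that the spatial multiplication by $w_k$ does not destroy the frequency localisation underlying the two Littlewood--Paley estimates, i.e.\ controlling the frequency leakage in $w_k \Delta_{>L_k} f$ and $w_k \Delta_{\leqslant L_k} f$. Since $w_k$ varies on the spatial scale $2^k$, its Fourier transform is concentrated in a ball of radius $\sim 2^{-k}$, far below the cutoff scale $2^{L_k} = 2^{L+k}$, so heuristically the multiplication almost commutes with the projectors. Making this rigorous requires a paraproduct decomposition together with an almost-orthogonality (Bernstein-type) argument showing that the off-diagonal, frequency-shifted terms contribute only tails that decay rapidly in $|j - L_k|$ over the dyadic indices $j$, hence are summable and do not spoil the powers of $2^{L_k}$ above. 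These are exactly the mapping properties of multiplication on weighted Besov spaces assembled in \cite{GHglobal}, so in a self-contained write-up I would isolate them as a preliminary lemma and then feed them into the two per-annulus estimates; once the leakage is controlled, all that remains is the elementary arithmetic of exponents carried out above.
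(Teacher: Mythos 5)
The paper does not prove this proposition at all: it is quoted verbatim as a result of Gubinelli--Hofmanov\'a \cite{GHglobal} and used as a black box, so there is no internal proof to compare yours against. That said, your reconstruction is essentially the argument given in \cite{GHglobal}, and the core of it is sound: the reduction to per-annulus estimates via the equivalence $\| f \|_{\mathcal{C}^s_{\langle \cdummy \rangle^a}} \sim \sup_k 2^{ak} \| w_k f \|_{\mathcal{C}^s}$, the elementary Bernstein-type bounds $\| \Delta_{>L_k} f \|_{\mathcal{C}^{-\alpha-\delta}} \lesssim 2^{-\delta L_k} \| f \|_{\mathcal{C}^{-\alpha}}$ and $\| \Delta_{\leqslant L_k} f \|_{\mathcal{C}^{-\alpha+\gamma}} \lesssim 2^{\gamma L_k} \| f \|_{\mathcal{C}^{-\alpha}}$ (immediate for $B^s_{\infty,\infty}$ since the norm is a supremum over blocks), and the observation that the weight gap has been rigged to equal the regularity gap so that the per-annulus factor is $2^{\delta(k-L_k)}$ (resp.\ $2^{\gamma(L_k-k)}$), whence $L_k = L + k$ works uniformly in $\alpha, \delta, \gamma, a, b$. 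The exponent bookkeeping is correct, including the cancellation of the $a$- and $b$-dependence.

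Two technical points that you defer deserve to be named more precisely, since they are where the actual work in \cite{GHglobal} sits. First, the frequency-leakage issue you flag is real but is only half of the difficulty: the other half is that $\Delta_{>L_k}$ is not spatially local, so bounding $\| w_k \Delta_{>L_k} f \|$ by a norm of $f$ weighted by $2^{-(a-\delta)k}$ requires the rapid off-diagonal decay of the Littlewood--Paley kernels to control the contribution of $f$ far from $\tmop{supp}\, w_k$; equivalently one needs uniform boundedness of $\Delta_{>L_k}$ on polynomially weighted spaces. Second, the localisation equivalence for negative regularity $s$ needs the multiplier bounds $\| w_k g \|_{\mathcal{C}^s} \lesssim \| w_k \|_{\mathcal{C}^{\rho}} \| g \|_{\mathcal{C}^s}$ for $\rho > -s$, with uniformity in $k$ coming from the fact that $w_k$ oscillates on scale $2^k$. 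Both are exactly the preliminary lemmas of \cite{GHglobal} that you propose to isolate, so your plan is viable; it is just worth being aware that the ``elementary arithmetic of exponents'' is the easy part and the weighted kernel estimates are the substantive one.
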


We give two remarks that one should keep in mind.

\begin{remark}
  Note that the above result is stated only for $f$ with strictly negative
  regularity, however we sometimes apply it to stochastic terms with positive
  regularity (e.g. to $W$ in \eqref{def:W}), meaning of course that the
  decomposition is actually
  \begin{eqnarray*}
    W & = & W_{>} + W_{\leqslant}\\
    & \text{with} & \\
    W_{>} = (1 - \Delta)^{- 1} \mathcal{U}_{>} ((1 - \Delta) W) & \text{and} &
    W_{\leqslant} = (1 - \Delta)^{- 1} \mathcal{U}_{\leqslant} ((1 - \Delta)
    W)
  \end{eqnarray*}
  which has the desired properties.
\end{remark}

\begin{remark}
  The decomposition from Proposition \ref{prop:loc} clearly depends on the
  precise choice of the sequence $(L_k)$ and thus one might wonder whether
  objects like the truncated operators in Definition \ref{def:3ops} are
  actually well-defined. While changing the sequence $(L_k)$ of course changes
  the objects appearing in the truncated operators, importantly the form
  domain $e^{W_{>}} \mathcal{H}^1$ will not change since changing the sequence
  $(L_k) $will result in a different object $\tilde{W}_{>}$ for which one has that
  \[ \tilde{W}_{>} - W_{>}  \text{ is smooth}, \]
  meaning that $e^{W_{>}} \mathcal{H}^1 = e^{\tilde{W}_{>}} e^{W_{>} -
  \tilde{W}_{>}} \mathcal{H}^1 = e^{\tilde{W}_{>}} \mathcal{H}^1 .$ Similarly
  one can see that the definition of the ``full operator'' \eqref{eqn:AHwhole}
  will not change if one changes the sequence $(L_k) .$ Moreover, the
  definition of the truncated operators will only change by a smooth term if
  one changes the localising sequence.
\end{remark}

Now we give a result which says that all the stochastic objects appearing in
the definition of the operators in Definition \ref{def:3ops} actually exist
and are the limits of their periodic counterparts which appear in {\cite{GUZ}}
and thus also the limits of smooth objects.

\begin{theorem}[``Lift'' of the noise in 2- and 3-dimensions]
  \label{thm:noise}For $d = 2, 3$let $\xi^{(d)}$ and $\xi_M^{(d)}$ be the
  spatial white noise on $\mathbb{R}^d$ and the $d -$dimensional torus
  $\mathbb{T}_M^d = (M\mathbb{T})^d$ of size $M > 0$ respectively. Let
  \begin{eqnarray*}
    X^{(d)} \assign (1 - \Delta)^{- 1} \xi^{(d)} & \text{ \ } & X_M^{(d)}
    \assign (1 - \Delta)^{- 1} \xi^{(d)}_M\\
    \xi^{(d)}_{\varepsilon} = \xi^{(d)} \ast \eta^{(d)}_{\varepsilon} &  &
    \xi_{M, \varepsilon}^{(d)} = \xi_M^{(d)} \ast \eta^{(d)}_{\varepsilon}\\
    X_{\varepsilon}^{(d)} \assign (1 - \Delta)^{- 1} \xi^{(d)}_{\varepsilon} &
    & X_{M, \varepsilon}^{(d)} \assign (1 - \Delta)^{- 1} \xi_{M,
    \varepsilon}^{(d)}
  \end{eqnarray*}
  for some smoothing kernels $\eta^{(d)}_{\varepsilon}$.
  \begin{enumerateroman}
    \item In the case $d = 2$ there exist random distributions $: | \nabla
    X_M^{(2)} | :$and  $: | \nabla X^{(2)} | :$s.t.
    \begin{eqnarray*}
      | \nabla X_{M, \varepsilon}^{(2)} |^2 - a^{(2)}_{M, \varepsilon}
      \rightarrow : | \nabla X_M^{(2)} |^2 : & \text{a.s. in}  &
      \mathcal{C}^{- \delta} (\mathbb{T}_M^2)  \text{ as } \varepsilon
      \rightarrow 0\\
      | \nabla X_{\varepsilon}^{(2)} |^2 - a^{(2)}_{\varepsilon} \rightarrow :
      | \nabla X^{(2)} |^2 : & \text{a.s. in}  & \mathcal{C}_{\langle \cdummy
      \rangle^{- \sigma}}^{- \delta} (\mathbb{R}^2) \text{ as } \varepsilon
      \rightarrow 0\\
      & \text{and} & \\
      \nabla X_M \rightarrow \nabla X \text{and } : | \nabla X_M^{(2)} |^2 :
      \rightarrow : | \nabla X^{(2)} |^2 : & \text{a.s. in} &
      \mathcal{C}_{\langle \cdummy \rangle^{- \sigma}}^{- \delta}
      (\mathbb{R}^2)  \text{ as } M \rightarrow \infty,
    \end{eqnarray*}
    for $\delta, \sigma > 0$ and suitably chosen diverging constants
    $a^{(2)}_{M, \varepsilon}, a^{(2)}_{\varepsilon}$.
    
    \item In the case $d = 3$ there exist random distributions
    \[ X_M^{\zzone}, X_M^{\zztwo}, : \left| \nabla X_M^{\zzone} \right|^2 :,
       \nabla X^{(3)}_M \cdummy \nabla X_M^{\zztwo}, X^{\zzone}, X^{\zztwo}, :
       \left| \nabla X^{\zzone} \right|^2 :, \nabla X^{(3)} \cdummy \nabla
       X^{\zztwo} \]
    s.t.
    \begin{eqnarray*}
      X_{M, \varepsilon}^{\zzone} \backassign (1 - \Delta)^{- 1} (| \nabla
      X_{M, \varepsilon}^{(3)} |^2 - a^{(3)}_{M, \varepsilon}) \rightarrow
      X_M^{\zzone} & \text{a.s. in}  & \mathcal{C}^{1 - \delta}
      (\mathbb{T}_M^3)  \text{ as } \varepsilon \rightarrow 0\\
      X_{\varepsilon}^{\zzone} \backassign (1 - \Delta)^{- 1} (| \nabla
      X_{\varepsilon}^{(3)} |^2 - a^{(3)}_{\varepsilon}) \rightarrow
      X^{\zzone} & \text{a.s. in}  & \mathcal{C}_{\langle \cdummy \rangle^{-
      \sigma}}^{1 - \delta} (\mathbb{R}^3)  \text{ as } \varepsilon
      \rightarrow 0\\
      X_M \rightarrow X \text{a.s. in } \mathcal{C}_{\langle \cdummy
      \rangle^{- \sigma}}^{\frac{1}{2} - \delta} (\mathbb{R}^3)  \text{ and }
      X_M^{\zzone} \rightarrow X^{\zzone} & \text{a.s. in} &
      \mathcal{C}_{\langle \cdummy \rangle^{- \sigma}}^{1 - \delta}
      (\mathbb{R}^3)  \text{ as } M \rightarrow \infty\\
      X_{M, \varepsilon}^{\zztwo} \backassign 2 (1 - \Delta)^{- 1} \left(
      \nabla X^{(3)}_{M, \varepsilon} \cdot \nabla X_{M, \varepsilon}^{\zzone}
      \right) \rightarrow X_M^{\zztwo} & \text{a.s. in}  &
      \mathcal{C}^{\frac{3}{2} - \delta} (\mathbb{T}_M^3)  \text{ as }
      \varepsilon \rightarrow 0\\
      X_{\varepsilon}^{\zztwo} \backassign 2 (1 - \Delta)^{- 1} \left( \nabla
      X^{(3)}_{\varepsilon} \cdot \nabla X_{\varepsilon}^{\zzone} \right)
      \rightarrow X^{\zztwo} & \text{a.s. in}  & \mathcal{C}_{\langle \cdummy
      \rangle^{- \sigma}}^{\frac{3}{2} - \delta} (\mathbb{R}^3)  \text{ as }
      \varepsilon \rightarrow 0\\
      \left| \nabla X_{M, \varepsilon}^{\zzone} \right|^2 - b_{M, \varepsilon}
      \rightarrow : \left| \nabla X_M^{\zzone} \right|^2 : & \text{a.s. in}  &
      \mathcal{C}^{- \delta} (\mathbb{T}_M^3)  \text{ as } \varepsilon
      \rightarrow 0\\
      \left| \nabla X_{\varepsilon}^{\zzone} \right|^2 - b_{\varepsilon}
      \rightarrow : \left| \nabla X^{\zzone} \right|^2 : & \text{a.s. in}  &
      \mathcal{C}_{\langle \cdummy \rangle^{- \sigma}}^{- \delta}
      (\mathbb{R}^3)  \text{ as } \varepsilon \rightarrow 0\\
      \nabla X^{(3)}_{M, \varepsilon} \cdummy \nabla X_{M,
      \varepsilon}^{\zztwo} \rightarrow \nabla X^{(3)}_M \cdummy \nabla
      X_M^{\zztwo} & \text{a.s. in}  & \mathcal{C}^{- \frac{1}{2} - \delta}
      (\mathbb{T}_M^3)  \text{ as } \varepsilon \rightarrow 0\\
      \nabla X_{\varepsilon} \cdummy \nabla X_{\varepsilon}^{\zztwo}
      \rightarrow \nabla X^{(3)} \cdummy \nabla X^{\zztwo} & \text{a.s. in}  &
      \mathcal{C}_{\langle \cdummy \rangle^{- \sigma}}^{- \frac{1}{2} -
      \delta} (\mathbb{R}^3)  \text{ as } \varepsilon \rightarrow 0
    \end{eqnarray*}
    and
    \begin{eqnarray*}
      : \left| \nabla X_M^{\zzone} \right|^2 : \rightarrow : \left| \nabla
      X^{\zzone} \right|^2 : & \text{a.s. in } & \mathcal{C}_{\langle \cdummy
      \rangle^{- \sigma}}^{- \delta} (\mathbb{R}^3),\\
      \nabla X^{(3)}_M \cdummy \nabla X_M^{\zztwo} \rightarrow \nabla X^{(3)}
      \cdummy \nabla X^{\zztwo} & \text{a.s. in} & \mathcal{C}_{\langle
      \cdummy \rangle^{- \sigma}}^{1 - \delta} (\mathbb{R}^3)  \text{ as } M
      \rightarrow \infty,
    \end{eqnarray*}
    for $\delta, \sigma > 0.$
  \end{enumerateroman}
  \begin{proof}
    The objects appearing on the whole space are basically the same that
    appear in {\cite{HairerLabbe15}} and {\cite{hairerlabbe3d}} in 2- and 3-d
    respectively. The periodic objects are the same (up to rescaling) that
    appeared in {\cite{GUZ}}. Furthermore, to show the convergence of the
    periodic objects one can proceed as in Section 3.1 in {\cite{GHglobal}}.
  \end{proof}
\end{theorem}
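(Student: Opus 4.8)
The plan is to regard every object in the statement as an element of a fixed, finite Wiener chaos over the white noise and to reduce the whole theorem to uniform second-moment bounds on Littlewood--Paley blocks. Writing $Y_\varepsilon$ for any of the regularised objects (for instance $X_\varepsilon$, the renormalised $|\nabla X_\varepsilon|^2-a_\varepsilon$, $X^{\zzone}_\varepsilon$, $X^{\zztwo}_\varepsilon$, $\nabla X_\varepsilon\cdummy\nabla X^{\zztwo}_\varepsilon$), each $Y_\varepsilon$ lies in a chaos of order at most four, since it is obtained from $\xi$ by the linear maps $(1-\Delta)^{-1}$ and $\nabla$ together with Wick products. Gaussian hypercontractivity then gives, for every $n$,
\[
\mathbb{E}\,|\Delta_j Y_\varepsilon(x)|^{2n}\lesssim_n\big(\mathbb{E}\,|\Delta_j Y_\varepsilon(x)|^{2}\big)^{n},
\]
so it suffices to control the single second moment. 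I would compute $\mathbb{E}\,|\Delta_j Y_\varepsilon(x)|^{2}$ by Wick's theorem, expressing it through products of the covariance kernel of $(1-\Delta)^{-1}$ and its derivatives, and establish $\mathbb{E}\,|\Delta_j Y_\varepsilon(x)|^{2}\lesssim 2^{-2j\alpha}$ uniformly in $\varepsilon$, with $\alpha$ the regularity claimed in the statement. Feeding these into a \emph{weighted} Besov/Kolmogorov criterion upgrades them to almost-sure finiteness of the $\mathcal{C}^{\alpha}_{\langle\cdummy\rangle^{-\sigma}}$ norms. Note that for the whole-space objects the second moment is in fact independent of $x$ by stationarity, so the weight $\langle\cdummy\rangle^{-\sigma}$ is needed purely to tame the global supremum of a stationary, hence almost surely unbounded, Gaussian field; since such suprema grow only like a power of $\log|x|$, every $\sigma>0$ suffices to make the spatial integral in the embedding converge.

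The renormalisation is dictated by the short-distance singularity of the kernel: the diverging constants $a^{(d)}_\varepsilon$, $b_\varepsilon$ and their periodic analogues are chosen precisely to subtract the divergent self-contraction in $\mathbb{E}\,|\nabla X_\varepsilon|^2$ and $\mathbb{E}\,|\nabla X^{\zzone}_\varepsilon|^2$, a purely local quantity which therefore agrees on $\mathbb{T}^d_M$ and on $\mathbb{R}^d$ up to a bounded error. For the limit $\varepsilon\to0$ I would run the same computation on the difference $Y_\varepsilon-Y_{\varepsilon'}$ and extract a factor $(\varepsilon\vee\varepsilon')^{\theta}2^{2j\theta}$ for some $\theta>0$, i.e. a small loss of regularity in exchange for a convergence rate; with the moment/embedding machinery this shows that $(Y_\varepsilon)$ is almost surely Cauchy in $\mathcal{C}^{\alpha-}_{\langle\cdummy\rangle^{-\sigma}}$ and identifies the limit.

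For the passage $M\to\infty$ I would compare the periodic and whole-space objects at the level of covariances. The periodic white noise carries the periodised covariance, and the kernel of $(1-\Delta)^{-1}$ on $\mathbb{T}^d_M$ is the periodisation of the whole-space Green's function; as $M\to\infty$ this converges to the whole-space kernel, the discrepancy being governed by the far periodic images, which are controlled in the weighted norm. Running the second-moment estimate on the difference $Y_M-Y$ then gives $\mathbb{E}\,|\Delta_j(Y_M-Y)(x)|^{2}\to0$ with a rate in $M$, and the embedding yields the asserted almost-sure convergence in the weighted H\"older spaces on $\mathbb{R}^d$. This is the step one carries out as in Section~3.1 of \cite{GHglobal}.

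The main obstacle is twofold, and both difficulties concentrate in the combinatorics of the higher objects $X^{\zztwo}$, $:\!|\nabla X^{\zzone}|^2\!:$ and $\nabla X\cdummy\nabla X^{\zztwo}$. First, for these the second-moment kernel is a nested integral of several Green's functions and their gradients, and one must check that the short-distance singularities are exactly integrable at the claimed regularity while keeping the weight under control: pairing the heavy tail of one factor against the others is where a crude bound would fail, and this is precisely where the strict gaps $\sigma'>\sigma$ and $\delta'>\delta$ are used. Second, in the $M\to\infty$ comparison the renormalisation constants on the torus and on the whole space need not coincide, so one must verify that their difference remains bounded (or itself converges) so as not to spoil the limit; propagating this control through all the nested contractions is the most delicate bookkeeping in the argument.
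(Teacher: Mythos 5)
Your proposal is methodologically sound and is essentially the approach taken in the literature to which the paper defers: the paper's own ``proof'' of Theorem \ref{thm:noise} consists entirely of citations to \cite{HairerLabbe15}, \cite{hairerlabbe3d}, \cite{GUZ} and Section 3.1 of \cite{GHglobal}, and those references carry out precisely the chaos-decomposition / hypercontractivity / second-moment-on-Paley-blocks / weighted Besov--Kolmogorov scheme you describe, including the periodisation-of-the-Green's-function comparison for $M\to\infty$. The two points you flag as delicate --- the nested contraction estimates for the chaos-3 and chaos-4 objects $X^{\zztwo}$, $:\!|\nabla X^{\zzone}|^2\!:$, $\nabla X\cdummy\nabla X^{\zztwo}$, and the boundedness of the difference between the torus and whole-space renormalisation constants --- are exactly where the real work lies and remain unverified in your sketch, so as written it is a correct strategy outline rather than a complete argument; but it is already more explicit than what the paper itself provides.
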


\begin{notation}
  From now on we drop the dimensional index and instead just work in 3
  dimensions, seeing that the two-dimensional case is simpler, meaning we
  write
  \[ X^{(3)} \equiv X \text{etc.} \]
\end{notation}

We need another result which tells us that the product $e^{W_{>}} Z_{>}$ is
(locally) in $\mathcal{C}^{- \frac{1}{2} - \varepsilon}$ for small
$\varepsilon > 0.$

\begin{lemma}
  \label{lem:stocprod}Let $Z, W$ be defined as before and let $\psi$ be
  Lipschitz with compact support, then for any $\varepsilon > 0$ we have
  \[ \| \psi e^{2 W_{>}} Z_{>} \|_{\mathcal{C}^{- \frac{1}{2} - \varepsilon}}
     \leqslant C (\Xi, \psi), \]
  where $C (\Xi, \psi) > 0$ is a constant depending on the norms of the noise
  terms appearing in Theorem \ref{thm:noise} and the bump function $\psi .$
  
  \begin{proof}
    This is essentially Lemma 2.40 from {\cite{GUZ}}. Note that the bump
    function has enough regularity to multiply it with the stochastic terms.
  \end{proof}
\end{lemma}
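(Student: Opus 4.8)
The plan is to reduce the statement to the paracontrolled bookkeeping already carried out in Lemma 2.40 of \cite{GUZ}, the only genuinely new features being the cutoff $\psi$ and the fact that we are on $\mathbb{R}^3$. First I would record that $e^{2 W_{>}} \in \mathcal{C}^{\frac{1}{2} - 2 \varepsilon}$ (the exponential of a bounded function of positive H\"older regularity has the same regularity) while $Z_{>} \in \mathcal{C}^{- \frac{1}{2} - 2 \varepsilon}$, so the product $e^{2 W_{>}} Z_{>}$ sits exactly at the borderline where Bony's theorem fails, the sum of the two regularities being $- 4 \varepsilon < 0$. Decomposing
\[ e^{2 W_{>}} Z_{>} = e^{2 W_{>}} \prec Z_{>} + e^{2 W_{>}} \succ Z_{>} + e^{2 W_{>}} \circ Z_{>}, \]
the first two terms are harmless: $e^{2 W_{>}} \prec Z_{>}$ inherits the regularity of $Z_{>}$ and so lands just below $- \frac{1}{2}$ (this is exactly the term that dictates the claimed H\"older exponent), while $e^{2 W_{>}} \succ Z_{>} = Z_{>} \prec e^{2 W_{>}} \in \mathcal{C}^{- 4 \varepsilon}$ is strictly smoother. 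Thus the entire difficulty is concentrated in the resonant term $e^{2 W_{>}} \circ Z_{>}$.

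To treat the resonance I would exploit that $e^{2 W_{>}}$ is paracontrolled by $W_{>}$. Paralinearising the exponential gives
\[ e^{2 W_{>}} = 2 e^{2 W_{>}} \prec W_{>} + R, \qquad R \in \mathcal{C}^{1 - 4 \varepsilon}, \]
so that $R \circ Z_{>} \in \mathcal{C}^{\frac{1}{2} - 6 \varepsilon}$ is immediate since the regularities add to a positive number. For the leading piece I would invoke the commutator (associativity) lemma of \cite{GIP}, which controls $(2 e^{2 W_{>}} \prec W_{>}) \circ Z_{>} - 2 e^{2 W_{>}} (W_{>} \circ Z_{>})$ in $\mathcal{C}^{\frac{1}{2} - 6 \varepsilon}$: here the three exponents $\frac{1}{2} - 2 \varepsilon$, $\frac{1}{2} - 2 \varepsilon$, $- \frac{1}{2} - 2 \varepsilon$ sum to a positive number while the last two sum to a negative one, which is precisely the hypothesis of that lemma. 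This reduces the whole estimate to the single resonant product $W_{>} \circ Z_{>}$, which is itself borderline (its exponents again sum to $- 4 \varepsilon$) and hence cannot be defined by deterministic means.

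The crux, and the only place where probability enters, is therefore to make sense of $W_{>} \circ Z_{>}$ and to bound it in $\mathcal{C}^{- \frac{1}{2} - \varepsilon}$. Here I would unfold $W_{>} = X_{>} + X^{\zzone}_{>} + X^{\zztwo}_{>}$ and the constituents of $Z_{>}$: every resonance involving $X^{\zzone}$ or $X^{\zztwo}$ is off-diagonal enough to be classical, the sum of the relevant regularities being positive, so that the only genuinely singular contribution is the one pairing $X$ against the roughest constituent $\nabla X \cdot \nabla X^{\zztwo}$ of $Z_{>}$. These are exactly the renormalised stochastic objects supplied by Theorem \ref{thm:noise}, obtained as almost sure limits of their smooth periodic approximations, and transcribing the corresponding argument from Lemma 2.40 of \cite{GUZ} is where I expect the main obstacle to lie, namely checking that these resonances stay finite in the weighted spaces on $\mathbb{R}^3$. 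Finally, multiplication by $\psi$ causes no trouble: being Lipschitz it has regularity greater than $\frac{1}{2} + \varepsilon$ and may be multiplied freely into a $\mathcal{C}^{- \frac{1}{2} - \varepsilon}$ distribution without loss, while its compact support converts the weighted estimates for the noise into the claimed unweighted bound, with the constant depending on $\psi$ and on the noise norms of Theorem \ref{thm:noise}.
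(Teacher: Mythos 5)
Your proposal is correct and is essentially the paper's own argument unpacked: the paper's proof consists of citing Lemma 2.40 of \cite{GUZ} plus the observation that the Lipschitz cutoff can be multiplied into a $\mathcal{C}^{-\frac{1}{2}-\varepsilon}$ distribution, and your Bony decomposition, paralinearisation of the exponential, commutator lemma, and reduction to the single singular resonance $X \circ (\nabla X \cdot \nabla X^{\zztwo})$ is precisely the content of that cited lemma, transported to $\mathbb{R}^3$. The only small imprecision is attributing the removal of weights to the compact support of $\psi$, whereas the localised objects $W_{>}$ and $Z_{>}$ already live in unweighted H\"older spaces by Proposition \ref{prop:loc}; this does not affect the validity of the argument.
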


Lastly we make a simple observation that for functions in weighted $L^p
-$spaces, say $f \in L_{\langle \cdummy \rangle^{- \rho}}^p$ for $\rho > 0$,
one has the following bound for the product with an indicator function of a
ball
\begin{equation}
  \| \chi_{B (R)} f \|_{L^p (\mathbb{R}^d)} \leqslant \| \chi_{B (R)}
  \|_{L_{\langle \cdummy \rangle^{\rho}}^{\infty} (\mathbb{R}^d)} \| f
  \|_{L_{\langle \cdummy \rangle^{- \rho}}^p (\mathbb{R}^d)} \leqslant
  R^{\rho} \| f \|_{L_{\langle \cdummy \rangle^{- \rho}}^p (\mathbb{R}^d)} .
  \label{eqn:Rpol}
\end{equation}
This allows us to localise the ``bulk'' terms at the cost of gaining a large
constant and will be quite useful later in Gronwall-type arguments.

\section{Global space-time solutions for the equation with truncated
noise}\label{sec:trunc}

In this section we prove global-in-time well-posedness for the truncated wave
equation
\begin{eqnarray}
  \partial^2_t u^{(R)} - H_R u^{(R)} & = & - u^{(R)} | u^{(R)} |^2  \text{on }
  [0, T] \times \mathbb{R}^3  \label{eqn:NLWtrunc}\\
  (u^{(R)}, \partial_t u^{(R)}) & = & (u^{(R)}_0, u^{(R)}_1), \nonumber
\end{eqnarray}
for $T > 0,$where $(u^{(R)}_0, u^{(R)}_1) \in \mathcal{D} \left( \sqrt{-
H_{\gg}} \right) \times L^2$. Recall the operators $H_R$ and $H_{\gg}$ defined
in Definition \ref{def:3ops} and the fact that $\mathcal{D} \left( \sqrt{-
H_{\gg}} \right) = e^{W_{>}} \mathcal{H}^1$ as well as the relevant bounds for
$u^{(R)} \in e^{W_{>}} \mathcal{H}^1$
\begin{equation}
  \left\{\begin{array}{lll}
    & | (u^{(R)}, H_R u^{(R)}) | & \leqslant - 2 (u^{(R)}, H_{\gg} u^{(R)}) +
    C (R, \Xi) \| u^{(R)} \|^2_{L^2}\\
    \| u^{(R)} \|^2_{L^2} \leqslant & - (u^{(R)}, H_{\gg} u^{(R)}) & \leqslant
    - 2 (u^{(R)}, H_R u^{(R)}) + C (R, \Xi) \| u^{(R)} \|^2_{L^2} .
  \end{array}\right. \label{eqn:Hbounds}
\end{equation}
\[ \  \]
from Lemma \ref{lem:3ops}.

We firstly observe that the PDE \eqref{eqn:NLWtrunc} admits a conserved energy
(recall that $H_R$ is self-adjoint) denoted by
\begin{eqnarray*}
  E^{(R)} (u^{(R)}) (t) & \assign & \frac{1}{2} (\partial_t u^{(R)} (t),
  \partial_t u^{(R)} (t))_{L^2} - \frac{1}{2} (u^{(R)} (t), H_R u^{(R)}
  (t))_{L^2} + \frac{1}{4} \int_{\mathbb{R}^3} | u^{(R)} (t, x) |^4 d x\\
  & = & E^{(R)} (u^{(R)}) (0)\\
  & = & E^{(R)} ((u^R_0, u^R_1))\\
  & \assign & \frac{1}{2} (u_1^{(R)}, u_1^{(R)})_{L^2} - \frac{1}{2}
  (u_0^{(R)}, H_R u_0^{(R)})_{L^2} + \frac{1}{4} \int_{\mathbb{R}^3} |
  u^{(R)}_0 (x) |^4 d x,
\end{eqnarray*}
see Section 3.3 in {\cite{GUZ}} for a rigorous justification.

\

Inspired by \eqref{eqn:Hbounds} we define the ``rough part'' of the energy to
be
\begin{align}
  E_{\gg} (u^{(R)}) (t) \assign & \frac{1}{2} (\partial_t u^{(R)} (t),
  \partial_t u^{(R)} (t))_{L^2} - \frac{1}{2} (u^{(R)} (t), H_{\gg} u^{(R)}
  (t))_{L^2} + \frac{1}{4} \int_{\mathbb{R}^3} | u^{(R)} (t, x) |^4 d x \\
   \overset{\eqref{eqn:HRH}}{=} & E^{(R)} (u^{(R)}) (t) + \frac{1}{2} \big(
  u^{(R)} (t), \big( \underset{\backassign
  \Xi^R_{\leqslant}}{\underbrace{C_{\gg} (R, \Xi) + \chi_{B (R)}
  Z_{\leqslant}}} \big) u^{(R)} (t) \big)_{L^2} 
\end{align}
But, importantly, it is positive and controls the \tmtextit{energy norm}
\begin{equation}
  \| \partial_t u^{(R)} \|_{L^2} + \left\| \sqrt{- H_{\gg}} u^{(R)}
  \right\|_{L^2} \label{eqn:energybound}
\end{equation}
uniformly in time. Evidently $E_{\gg} (u^{(R)})$ will not be conserved in
time, however we get
\begin{eqnarray*}
  \frac{d}{d t} E_{\gg} (u^{(R)} (t)) & = & \frac{d}{d t} E^{(R)} (u^{(R)})
  (t) + \frac{d}{d t} \frac{1}{2} (u^{(R)} (t), \Xi^{R}_{\leqslant} u^{(R)}
  (t))_{L^2}\\
  & = & (\partial_t u^{(R)} (t), \Xi^R_{\leqslant} u^{(R)} (t))_{L^2}\\
  | \nosymbol \ldots | & \leqslant & \| \Xi^R_{\leqslant} \|_{L^{\infty}} \|
  u^{(R)} (t) \|_{L^2} \| \partial_t u^{(R)} (t) \|_{L^2}\\
  & \leqslant & \frac{1}{2} \| \Xi^R_{\leqslant} \|_{L^{\infty}}  (\| u^{(R)}
  (t) \|^2_{L^2} + \| \partial_t u^{(R)} (t) \|^2_{L^2})\\
  & \leqslant & C \| \Xi^R_{\leqslant} \|_{L^{\infty}} E^{(R)}_{\gg}
  (u^{(R)}) (t),
\end{eqnarray*}
for some universal constant $C > 0$ having used \eqref{eqn:Hbounds} and
\eqref{eqn:energybound} in the last step.

Thus we get an exponential bound for all times by Gronwall, namely
\begin{equation}
  E_{\gg} (u^{(R)} (t)) \leqslant e^{\tilde{C} (\Xi, R) t} E _{\gg}
  (u^{(R)}_0, u^{(R)}_1) \label{eqn:almostbound}
\end{equation}
for some constant $\tilde{C} (\Xi, R) > 0,$recalling that by \eqref{eqn:Rpol}
the norm $\| \Xi^R_{\leqslant} \|_{L^{\infty}}$ grows polynomially in $R.$
Clearly this bound blows up if we take $R \rightarrow \infty$ but for finite
$R$ we will see that this is enough to get global-in-time solutions to
\eqref{eqn:NLWtrunc}.

\begin{theorem}
  \label{thm:trunc}For any $R > 0$ the equation \eqref{eqn:NLWtrunc} is
  globally well-posed. More precisely, for any $T > 0$ and initial data
  $(u^{(R)}_0, u^{(R)}_1) \in \mathcal{D} \left( \sqrt{- H_{\gg}} \right)
  \times L^2$ there exists a unique solution to
  \begin{equation}
    u^{(R)} (t) = \cos ( t \sqrt{- H_{\gg}} ) u^{(R)}_0 +
    \frac{\sin ( t \sqrt{- H_{\gg}} )}{\sqrt{- H_{\gg}}} u^{(R)}_1
    + \int^t_0 \frac{\sin ( (t - s) \sqrt{- H_{\gg}}
    )}{\sqrt{H_{\gg}}} ( {u^{(R)}}^3 (s) + u^{(R)} (s)
    \Xi^R_{\leqslant} ) d s \label{eqn:truncmild}
  \end{equation}
  in $C_{[0, T]} \mathcal{D} \left( \sqrt{- H_{\gg}} \right) \cap C_{[0, T]}^1
  L^2$ which depends continuously on the data.
\end{theorem}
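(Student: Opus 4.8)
The plan is to solve the mild equation \eqref{eqn:truncmild} by a contraction-mapping argument on a short time interval and then to promote the local solution to a global one using the coercive modified energy $E_{\gg}$ and the Gronwall bound \eqref{eqn:almostbound}, both of which have already been established before the statement. Throughout I would work in the Banach space $\mathcal{E}_T \assign C_{[0, T]} \mathcal{D} ( \sqrt{- H_{\gg}} ) \cap C_{[0, T]}^1 L^2$ equipped with the energy norm \eqref{eqn:energybound}. The crucial structural input is that, by \eqref{eqn:Hpos}, the self-adjoint operator $- H_{\gg}$ satisfies $- H_{\gg} \geqslant \mathrm{Id}$, so $\sqrt{- H_{\gg}}$ is a positive self-adjoint operator with a clean functional calculus: the propagators $\cos ( t \sqrt{- H_{\gg}} )$ and $\frac{\sin ( t \sqrt{- H_{\gg}} )}{\sqrt{- H_{\gg}}}$ are bounded on the relevant spaces by the spectral theorem, and in particular $\sqrt{- H_{\gg}} \, \frac{\sin ( t \sqrt{- H_{\gg}} )}{\sqrt{- H_{\gg}}} = \sin ( t \sqrt{- H_{\gg}} )$ together with the $t$-derivative of the Duhamel integral both act as $L^2$-bounded operators with norm at most one.

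For the local theory I would define $\Phi$ to be the right-hand side of \eqref{eqn:truncmild} and show it is a contraction on a ball of $\mathcal{E}_{\tau}$ for $\tau$ small, handling the two forcing terms separately. The cubic term is controlled using that $\mathcal{D} ( \sqrt{- H_{\gg}} ) = e^{W_{>}} \mathcal{H}^1$ with norm equivalent to the $\mathcal{H}^1$-norm of $v = e^{- W_{>}} u$ (Lemma \ref{lem:3ops}), together with the three-dimensional Sobolev embedding $\mathcal{H}^1 \hookrightarrow L^6$ and the $L^{\infty}$-boundedness of $e^{\pm W_{>}}$; this yields $\| u^3 \|_{L^2} \lesssim \| u \|^3_{\mathcal{D} ( \sqrt{- H_{\gg}} )}$ and the Lipschitz estimate $\| u^3 - w^3 \|_{L^2} \lesssim ( \| u \|^2 + \| w \|^2 ) \| u - w \|$ in the energy norm. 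The linear term $u \, \Xi^R_{\leqslant}$ is simpler still, being bounded on $L^2$ by $\| \Xi^R_{\leqslant} \|_{L^{\infty}} \| u \|_{L^2}$, where the $L^{\infty}$-norm of this compactly supported potential is finite and polynomial in $R$ by \eqref{eqn:Rpol}. Since the Duhamel integral gains a factor $\tau$ from time integration, on a ball of radius $M$ one finds $\| \Phi (u) \|_{\mathcal{E}_{\tau}} \lesssim M_0 + \tau ( M^3 + C (R) M )$ and an analogous difference bound, so taking $M \sim M_0 \assign \| (u_0^{(R)}, u_1^{(R)}) \|$ and $\tau$ small depending only on $M_0$ and $R$ makes $\Phi$ a contraction. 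This produces a unique local solution and, by the standard Lipschitz dependence of the fixed point on the data, continuous dependence on $[0, \tau]$.

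To globalise I would invoke the a priori bound derived just before the theorem: $E_{\gg} (u^{(R)}) (t)$ controls the energy norm \eqref{eqn:energybound} uniformly and obeys \eqref{eqn:almostbound}, whence $\sup_{t \in [0, T]} ( \| \partial_t u^{(R)} (t) \|_{L^2} + \| \sqrt{- H_{\gg}} \, u^{(R)} (t) \|_{L^2} ) < \infty$. Because the local existence time $\tau$ depends only on this energy norm and on $R$, and the former stays bounded on all of $[0, T]$, the local solution extends by finitely many steps of uniform length to cover $[0, T]$ for any $T > 0$, with continuous dependence propagating along the iteration. I expect the main obstacle to lie not in the fixed-point scheme—the cubic nonlinearity is energy-subcritical in three dimensions—but in legitimising the coercivity that closes the global bound: the natural conserved energy $E^{(R)}$ is indefinite because $H_R$ has no definite sign, so everything rests on replacing it by $E_{\gg}$ via \eqref{eqn:HRH} and on the sign condition \eqref{eqn:Hpos}, which is precisely the role of the uniformly positive comparison operator $H_{\gg}$.
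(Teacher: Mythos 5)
Your proposal is correct and follows essentially the same route as the paper: a contraction-mapping argument for the mild formulation using the embedding $\mathcal{D}\left(\sqrt{-H_{\gg}}\right)\hookrightarrow L^6$ and the $L^\infty$-bound on $\Xi^R_{\leqslant}$, followed by globalisation via the coercive almost-conserved energy $E_{\gg}$ and the Gronwall bound \eqref{eqn:almostbound}, which yields a uniform step length on $[0,T]$. The only cosmetic difference is that the paper invokes Stone's theorem to upgrade $L^\infty$-in-time regularity to continuity, whereas you work directly in the continuous-in-time space; this does not affect the substance of the argument.
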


\begin{proof}
  First of all, the fact that \eqref{eqn:truncmild} is the mild formulation of
  \eqref{eqn:NLWtrunc} follows simply by recalling that $H_R = H_{\gg} +
  \Xi^R_{\leqslant}$ and putting the linear term into the nonlinearity of the
  mild formulation.
  
  Next, we define the operator
  \[ \Psi (w) (t) \assign \cos \left( t \sqrt{- H_{\gg}} \right) u^{(R)}_0 +
     \frac{\sin \left( t \sqrt{- H_{\gg}} \right)}{\sqrt{- H_{\gg}}} u^{(R)}_1
     + \int^t_0 \frac{\sin \left( (t - s) \sqrt{- H_{\gg}}
     \right)}{\sqrt{H_{\gg}}} (w^3 (s) + w (s) \Xi^R_{\leqslant}) d s, \]
  for which we have the straightforward bounds
  \begin{eqnarray*}
    \left\| \sqrt{- H_{\gg}} \Psi (w) (t) \right\|_{L^2} & \leqslant & \left\|
    \sqrt{- H_{\gg}} u^{(R)}_0 \right\|_{L^2} + \| u^{(R)}_1 \|_{L^2} +
    \int^t_0 (\| w (s) \|^3_{L^6} + \| \Xi^R_{\leqslant} \|_{L^{\infty}} \| w
    (s) \|_{L^2}) d s\\
    & \lesssim & \| (u^{(R)}_0, u^{(R)}_1) \|_{\mathcal{D} \left( \sqrt{-
    H_{\gg}} \right) \times L^2} + \int^t_0 \left\| \sqrt{- H_{\gg}} w (s)
    \right\|^3_{L^2} d s + t \| \Xi^R_{\leqslant} \|_{L^{\infty}} \| w
    \|_{L_{[0, t]}^{\infty} L^2}\\
    & \lesssim & \| (u^{(R)}_0, u^{(R)}_1) \|_{\mathcal{D} \left( \sqrt{-
    H_{\gg}} \right) \times L^2} + t \left\| \sqrt{- H_{\gg}} w
    \right\|^3_{L_{[0, t]}^{\infty} L^2} + t \| \Xi^R_{\leqslant}
    \|_{L^{\infty}} \| w \|_{L_{[0, t]}^{\infty} L^2}
  \end{eqnarray*}
  having used the embedding
  \begin{equation}
    \mathcal{D} \left( \sqrt{- H_{\gg}} \right) \hookrightarrow L^6,
  \end{equation}
  which simply follows by noting that for $u = e^{W_{>}} v$, $v \in
  \mathcal{H}^1$, we have
  \begin{equation}
    \| u \|_{L^6} \leqslant \| e^{W_{>}} \|_{L^{\infty}} \| v \|_{L^6}
    \lesssim \| v \|_{\mathcal{H}^1} \lesssim \| u \|_{\mathcal{D} \left(
    \sqrt{- H_{\gg}} \right)} .
  \end{equation}
  Thus we may bound, using our almost-conserved energy $E^{(R)}_{\gg}$,
  \begin{align*}
    \underset{0 \leqslant t \leqslant T^{\ast}}{\sup} \left\| \sqrt{- H_{\gg}}
    \Psi (w) (t) \right\|_{L^2}  \lesssim & \| (u^{(R)}_0, u^{(R)}_1)
    \|_{\mathcal{D} \left( \sqrt{- H_{\gg}} \right) \times L^2} +\\&+ T^{\ast}
    e^{\tilde{C} (\Xi, R) T^{\ast}}  \left( (E_{\gg} (u^{(R)}_0,
    u^{(R)}_1))^{\frac{3}{2}} + \| \Xi^R_{\leqslant} \|_{L^{\infty}} (E_{\gg}
    (u^{(R)}_0, u^{(R)}_1))^{\frac{1}{2}} \right)
  \end{align*}
  and analogously
  \begin{align*}
    \| \partial_t \Psi (w) (t) \|_{L^2}  \lesssim & \| (u^{(R)}_0, u^{(R)}_1)
    \|_{\mathcal{D} \left( \sqrt{- H_{\gg}} \right) \times L^2} + \left\|
    \frac{d}{d t} \int^t_0 \frac{\sin \left( (t - s) \sqrt{- H_{\gg}}
    \right)}{\sqrt{H_{\gg}}} (w^3 (s) + w (s) \Xi^R_{\leqslant}) d s
    \right\|_{L^2}\\
     \lesssim & \| (u^{(R)}_0, u^{(R)}_1) \|_{\mathcal{D} \left( \sqrt{-
    H_{\gg}} \right) \times L^2} + \left\| \int^t_0 \cos \left( \left( (t - s)
    \sqrt{- H_{\gg}} \right) \right) w^3 (s) d s \right\|_{L^2}+ \\&+ \left\|
    \int^t_0 \cos \left( \left( (t - s) \sqrt{- H_{\gg}} \right) \right) (w
    (s) \Xi^R_{\leqslant}) d s \right\|_{L^2}\\
     \lesssim & \| (u^{(R)}_0, u^{(R)}_1) \|_{\mathcal{D} \left( \sqrt{-
    H_{\gg}} \right) \times L^2} + \int^t_0 (\| w (s) \|^3_{L^6} + \|
    \Xi^R_{\leqslant} \|_{L^{\infty}} \| w (s) \|_{L^2}) d s.
  \end{align*}
  Thus we have
  \begin{align*}
    \underset{0 \leqslant t \leqslant T^{\ast}}{\sup} \| \partial_t \Psi (w)
    (t) \|_{L^2}  \lesssim & \| (u^{(R)}_0, u^{(R)}_1) \|_{\mathcal{D} \left(
    \sqrt{- H_{\gg}} \right) \times L^2}+\\ &+ T^{\ast} e^{\tilde{C} (\Xi, R)
    T^{\ast}} \left( (E_{\gg} (u^{(R)}_0, u^{(R)}_1))^{\frac{3}{2}} + \|
    \Xi^R_{\leqslant} \|_{L^{\infty}} (E_{\gg} (u^{(R)}_0,
    u^{(R)}_1))^{\frac{1}{2}} \right) .
  \end{align*}
  Hence we can choose a constant
  \[ M = M \left( \| (u^{(R)}_0, u^{(R)}_1) \|_{\mathcal{D} \left( \sqrt{-
     H_{\gg}} \right) \times L^2} \right) > 0 \]
  and a time horizon
  \[ T^{\ast} = T^{\ast} ((E_{\gg} ((u^{(R)}_0, u^{(R)}_1))), \|
     \Xi^R_{\leqslant} \|_{L^{\infty}}), \]
  for which we have
  \begin{align*}
    \left\| \sqrt{- H_{\gg}} (\Psi (w) - \Psi (v)) (t) \right\|_{L^2}  = &
    \left\| \int^t_0 \sin \left( (t - s) \sqrt{- H_{\gg}} \right) (w^3 (s) -
    v^3 (s) + (w (s) - v (s)) \Xi^R_{\leqslant}) \right\|_{L^2}\\
     \leqslant & C T^{\ast} \| \Xi^R_{\leqslant} \|_{L^{\infty}} \| w - v
    \|_{L_{[0, T^{\ast}]}^{\infty} L^2} + T^{\ast} \| w^3 - v^3 \|_{L_{[0,
    T^{\ast}]}^{\infty} L^2}\\
     \leqslant & C T^{\ast} \| \Xi^R_{\leqslant} \|_{L^{\infty}} \| w - v
    \|_{L_{[0, T^{\ast}]}^{\infty} L^2} + \\ &+ T^{\ast} e^{\tilde{C} (\Xi, R)
    T^{\ast}} \| w - v \|_{L_{[0, T^{\ast}]}^{\infty} \mathcal{D} \left(
    \sqrt{- H_{\gg}} \right)} E_{\gg} ((u^{(R)}_0, u^{(R)}_1))\\
     \leqslant & \frac{1}{3} \| w - v \|_{L_{[0, T^{\ast}]}^{\infty}
    \mathcal{D} \left( \sqrt{- H_{\gg}} \right)},
  \end{align*}
  for $w, v$ in the ball of radius $M$ in the space $C_{[0, T^{\ast}]}
  \mathcal{D} \left( \sqrt{- H_{\gg}} \right) \cap C_{[0, T^{\ast}]}^1 L^2
  $and $0 < t \leqslant T^{\ast} .$ Here we have used
  \begin{eqnarray*}
    \| w^3 (s) - v^3 (s) \|_{L^2} & = & \| (w (s) - v (s)) (w^2 (s) + w (s) v
    (s) + v^2 (s)) \|_{L^2}\\
    & \leqslant & 2 \| w (s) - v (s) \|_{L^6} (\| w (s) \|^2_{L^6} + \| v (s)
    \|^2_{L^6})\\
    & \leqslant & C \| w - v \|_{L_{[0, T^{\ast}]}^{\infty} \mathcal{D}
    \left( \sqrt{- H_{\gg}} \right)} {e^{\tilde{C} (\Xi, R) T^{\ast}}} 
    E_{\gg} ((u^{(R)}_0, u^{(R)}_1)) .
  \end{eqnarray*}
  Analogously we prove
  \[ \| \partial_t (\Psi (w) - \Psi (v)) (t) \|_{L^2} \leqslant \frac{1}{3}
     \| w - v \|_{L_{[0, T^{\ast}]}^{\infty} \mathcal{D} \left( \sqrt{-
     H_{\gg}} \right)} \]
  for $w, v$ and $t$ as above.
  
  This gives us a solution to \eqref{eqn:truncmild} up to time $T^{\ast}$
  which lies in $L_{[0, T^{\ast}]}^{\infty} \mathcal{D} \left( \sqrt{-
  H_{\gg}} \right) \cap W_{[0, T^{\ast}]}^{1, \infty} L^2$. In addition,
  Stone's theorem (see Theorem VIII.7 in {\cite{reedsimon1}}) implies that the
  solution is even continuous in time and its derivative is continuous in
  $L^2$.
  
  Lastly we want to globalise this solution, which essentially means that we
  want to resolve the equation on intervals of length $T^{\ast}$ in order to
  obtain a solution in the entire interval $[0, T] .$ In order to do that we
  need to bound the norm of the solution at time $t$ by the initial data and
  the final time $T.$ In fact we get for a solution $u$ the bound
  \begin{eqnarray*}
    \left\| \sqrt{- H_{\gg}} u (t) \right\|_{L^2} + \| \partial_t u (t)
    \|_{L^2} & \lesssim & K \left( \| (u_0, u_1) \|_{\mathcal{D} \left(
    \sqrt{- H_{\gg}} \right) \times L^2} \right) + T e^{\tilde{C} (\Xi, R) T}
    L (E^{(R)}_{\gg} (u_0, u_1), \| \Xi^R_{\leqslant} \|_{L^{\infty}})
  \end{eqnarray*}
  simply by proceeding as above, here $K$ and $L$ denote some constants
  depending on the data polynomially. This bound allows us to choose a global
  $M$ in our fixed point procedure in which we also have a global $T^{\ast}$.
  
  This implies that we can restart the solution until the final time $T >
  0,$which concludes the proof.
\end{proof}

\section{Finite speed of propagation}\label{sec:speed}

We begin this section by giving first of all the classical proof of the finite
speed of propagation for the wave equation which can be found for example in
{\cite{Evans10}}. As we shall see it is not clear whether it can be adapted to
our situation, however it turns out that we can adapt a modified approach
which goes back to Tartar {\cite{tartar}} which we will briefly review.

Consider first the classical linear wave equation(in three dimensions for
definiteness)
\begin{eqnarray*}
  \partial^2_t v - \Delta v & = & 0\\
  (v, \partial_t v) |_{t = 0} & = & (v_0, v_1) \in \mathcal{H}^1
  (\mathbb{R}^3) \times L^2 (\mathbb{R}^3),
\end{eqnarray*}
which has the conserved energy
\[ E (v) \assign \frac{1}{2} \int_{\mathbb{R}^3} | \partial_t v |^2 + |
   \nabla v |^2 . \]
Furthermore, for a space-time point $(t, x)$ we consider the
\tmtextit{backward light cone}
\[ \mathfrak{C}_{(t, x)} \assign \left\{ (s, y) \in \mathbb{R}^3 : 0
   \leqslant s \leqslant t \text{ and } | y - x | \leqslant t - s \right\} . \]
Now, finite speed of propagation means that the solution $v$ at the space-time
point $(t, x)$ depends \tmtextit{only }on the backward light cone
$\mathfrak{C}_{(t, x)}$. In order to make this more quantitive, we define the
\tmtextit{local energy}
\begin{equation}
  e_{(t, x)} (s) \assign \frac{1}{2} \int_{B  (x, t - s)} | \partial_t v (s,
  y) |^2 + | \nabla v (s, y) |^2 d y. \label{eqn:originale}
\end{equation}
A simple computation yields
\begin{align*}
  \frac{d}{d s} e_{(t, x)} (s)  = & - \frac{1}{2} \int_{\partial B  (x, t -
  s)} | \partial_t v (s, y) |^2 + | \nabla v (s, y) |^2 d y + \int_{B  (x, t -
  s)} \partial^2_t v (s, y) \partial_t v (s, y) + \nabla \partial_t v (s, y)
  \nabla v (s, y) d y\\
   = & - \frac{1}{2} \int_{\partial B  (x, t - s)} | \partial_t v (s, y) |^2
  + | \nabla v (s, y) |^2 d y + \int_{\partial B  (x, t - s)} \partial_t v (s,
  y) \nabla v (s, y) d y +\\&+ \int_{B  (x, t - s)} \partial_t v (s, y)
  \underset{= 0}{\underbrace{(\partial^2_t v (s, y) - \Delta v (s, y))}} d y\\
   \leqslant & - \frac{1}{2} \int_{\partial B  (x, t - s)} | \partial_t v
  (s, y) |^2 + | \nabla v (s, y) |^2 d y + \frac{1}{2} \int_{\partial B  (x, t
  - s)} | \partial_t v (s, y) |^2 + | \nabla v (s, y) |^2 d y\\
   = & 0,
\end{align*}
where we have integrated by parts and used Young's inequality in the second
and third step respectively.

Thus we have for $0<s<t$
\[ e_{(t, x)} (s) \leqslant e_{(t, x)} (0) = \frac{1}{2} \int_{B (x, t)} |
   v_1 |^2 + | \nabla v_0 |^2 . \]
In particular this implies that if the initial data $(v_0, v_1)$ are
constantly equal to zero in $B (x, t)$ then the solution $v$ will also be
equal to zero inside the cone $\mathfrak{C}_{(t, x)}$.

We now reformulate the above result in the following way which is due to
Tartar {\cite{tartar}}: Instead of the definition in \eqref{eqn:originale} we
make the modification
\begin{equation}
  e_{(t, x)} (s) \assign \frac{1}{2} \int_{\mathbb{R}^3} \varphi_{t - s, x}
  (y) (| \partial_t v (s, y) |^2 + | \nabla v (s, y) |^2) d y, \label{etartar}
\end{equation}
where $\varphi_{t - s, x}$ is a positive radially symmetric bump function
approximating $\chi_{B  (x, t - s)}$. We repeat the above computation
\begin{align}
  \frac{d}{d s} e_{(t, x)} (s)  = & \frac{1}{2} \int_{\mathbb{R}^3}
  \frac{d}{d s} \varphi_{t - s, x} (y) (| \partial_t v (s, y) |^2 + | \nabla v
  (s, y) |^2) d y +\nonumber\\&+ \int_{\mathbb{R}^3} \varphi_{t - s, x} (y) (\partial^2_t v
  (s, y) \partial_t v (s, y) + \nabla \partial_t v (s, y) \nabla v (s, y)) d y
  \nonumber\\
   = & - \frac{1}{2} \int_{\mathbb{R}^3} \frac{d}{d t} \varphi_{t - s, x}
  (y) (| \partial_t v (s, y) |^2 + | \nabla v (s, y) |^2) d y +\nonumber\\&+
  \int_{\mathbb{R}^3} \varphi_{t - s, x} (y) \partial_t v (s, y) \underset{=
  0}{\underbrace{(\partial^2_t v (s, y) - \Delta v (s, y))}} d y -
  \int_{\mathbb{R}^3} \nabla \varphi_{t - s, x} (y) \partial_t v (s, y) \nabla
  v (s, y) d y \nonumber\\
   \leqslant & \int_{\mathbb{R}^3} \left( | \nabla \varphi_{t - s, x} (y) |
  - \frac{d}{d t} \varphi_{t - s, x} (y) \right) \left( \frac{1}{2} |
  \partial_t v (s, y) |^2 + \frac{1}{2} | \nabla v (s, y) |^2 \right) d y. 
  \label{negativity}
\end{align}
So if we want to re-obtain the same result as before, we should choose
$\varphi_{t - s, x}$ s.t.
\[ | \nabla \varphi_{t - s, x} (y) | - \frac{d}{d t} \varphi_{t - s, x} (y)
   \leqslant 0. \]
We make the following choice. Set $\psi : \mathbb{R} \rightarrow \mathbb{R}_+$
as
\begin{eqnarray*}
  \psi (r) & = & \left\{\begin{array}{ll}
    1 & r \in (- \infty, 0]\\
    1 - r \qquad & r \in [0, 1]\\
    0 & r \in [1, \infty)
  \end{array}\right.,
\end{eqnarray*}
which is Lipschitz and a.e. differentiable with $\psi' \leqslant 0$; then we
consider
\begin{equation}
  \varphi_{(t, x)} (y, s) \assign \psi (| y - x | -\tmmathbf{c} (t - s))
  \label{eqn:defphi}
\end{equation}
which is equal to $1$ for $| y - x | \leqslant \tmmathbf{c} (t - s)$ and $0$
for $| y - x | -\tmmathbf{c} (t - s) \geqslant 1$ and interpolates linearly
inbetween. Here $\tmmathbf{c}> 0$ is a constant we will choose later; It can
be thought of as the speed of propagation. Observe that
\begin{eqnarray}
  | \nabla \varphi_{(t, x)} (y, s) | & = & \left| \frac{1}{\tmmathbf{c}}
  \frac{d}{d s} \psi (| y - x | -\tmmathbf{c} (t - s)) \frac{y - x}{| y - x |}
  \right| \nonumber\\
  & = & \frac{1}{\tmmathbf{c}} \left| \frac{d}{d s} \varphi_{(t, x)} (y, s)
  \right| \nonumber\\
  & = & - \frac{1}{\tmmathbf{c}} \frac{d}{d s} \varphi_{(t, x)} (y, s)
  \nonumber\\
  & = & \frac{1}{\tmmathbf{c}} \frac{d}{d t} \varphi_{(t, x)} (y, s)
  \nonumber\\
  & = & - \psi' (| y - x | -\tmmathbf{c} (t - s)) \nonumber\\
  & = & \chi_{| y - x | -\tmmathbf{c} (t - s) \in [0, 1]}  \label{phider}
\end{eqnarray}
because of the choice of $\psi$. This shows us that the constant
$\tmmathbf{c}$ allows us to make the bound in \eqref{negativity} more
negative.

Note also that we could in principle choose the constant {\tmstrong{$c$}} to
depend on other parameters such as the size of the noise, however it appears
that is actually sufficient for all our purposes to set
\begin{equation}
  \tmmathbf{c}= 2, \label{c=}
\end{equation}
although it seems the computations would still be true for any $\tmmathbf{c}>
1.$

Another thing to note is that if we add a constant quadratic term to the local
energy i.e.
\[ e_{(t, x)} (s) \assign \frac{1}{2} \int_{\mathbb{R}^3} \varphi_{t - s, x}
   (y) (| \partial_t v (s, y) |^2 + | \nabla v (s, y) |^2 + K | v (s, y) |^2)
   d y \]
for $K \geqslant 1$ this leads to the bound
\begin{align*}
  \frac{d}{d s} e_{(t, x)} (s)  = & - \frac{1}{2} \int_{\mathbb{R}^3}
  \frac{d}{d t} \varphi_{t - s, x} (y) (| \partial_t v (s, y) |^2 + | \nabla v
  (s, y) |^2 + K | v (s, y) |^2) d y +\\&+ K \int_{\mathbb{R}^3} \varphi_{t - s,
  x} (y) \partial_t v (s, y) v (s, y) - \int_{\mathbb{R}^3} \nabla \varphi_{t
  - s, x} (y) \partial_t v (s, y) \nabla v (s, y) d y\\
   \leqslant & - \frac{K}{2} \int_{\mathbb{R}^3} \chi_{| y - x |
  -2 (t - s) \in [0, 1]} | v (s, y) |^2 d y + \int_{\mathbb{R}^3}
  \varphi_{t - s, x} (y) \left( \frac{1}{2} | \partial_t v (s, y) |^2 + K^2 |
  v (s, y) |^2 \right) d y\\
   \leqslant&  - \frac{K}{2} \int_{\mathbb{R}^3} \chi_{| y - x |
  -2 (t - s) \in [0, 1]} | v (s, y) |^2 d y + K e_{(t, x)} (s)
\end{align*}
meaning there is a trade-off in that we gain a negative term on the right-hand
side while paying with a ``Gronwall'' term

Note also that this approach has the upside that one does not need to evaluate
anything on the boundary of a ball, as one does if one takes the approach
\eqref{eqn:originale}, which is useful since we are dealing with distributions
for which it is a priori not at all clear how one would do that.

\

Now we are ready to state the first new result which extends the approach we
just described in order to obtain finite speed of propagation for the linear
wave-type equation
\begin{equation}
  \left\{\begin{array}{lll}
    \partial^2_t u - H u & = & 0 \quad \text{on } \mathbb{R}_+ \times
    \mathbb{R}^3\\
    (u, \partial_t u) |_{t = 0} & = & (u_0, u_1)
  \end{array}\right., \label{eqn:Hlinear}
\end{equation}
where $H \text{``=''} \Delta + \xi$ is the full \tmtextit{Anderson
Hamiltonian}.

Since we do not have a direct way of solving \eqref{eqn:Hlinear} (or indeed
making sense of it for now), we instead consider the family of solutions
$u^{(R)}$ to
\begin{equation}
  \left\{\begin{array}{lll}
    \partial^2_t u^{(R)} - H_R u^{(R)}  & = & 0 \quad \text{on } \mathbb{R}_+
    \times \mathbb{R}^3\\
    (u^{(R)}, \partial_t u^{(R)}) |_{t = 0} & = & (u^{(R)}_0, u^{(R)}_1)
  \end{array}\right. \label{eqn:HRlinear},
\end{equation}
recalling Definition \ref{def:3ops} and prove finite speed of propagation for
them which will give us
\[ u^{(R)} = u^{(L)}  \text{ for } R \geqslant L \]
inside a space-time region $G_L$ which is increasing in $L$ and tends to
$\mathbb{R}_+ \times \mathbb{R}^3$ as $L \rightarrow \infty$ as long as their
initial data agree.

\

We introduce a weak formulation for the formal PDE \eqref{eqn:Hlinear} and
the weak formulation \eqref{eqn:HRlinear} will be a suitably truncated version
thereof. We say $u = e^{W_{>}} v$ is a \tmtextit{weak solution }to
\eqref{eqn:Hlinear} if
\[ (v, \partial_t v) |_{t = 0} = (e^{- W_{>}} u_0, e^{- W_{>}} u_1) \]
and
\begin{align}
  (e^{W_{>}} \partial^2_t v, e^{W_{>}} \phi)_{\mathcal{D} \left( \sqrt{-
  H_{\gg}} \right)^{\ast}, \mathcal{D} \left( \sqrt{- H_{\gg}} \right)} = &
  ( H e^{W_{>}} v, e^{W_{>}}\phi)_{\mathcal{D} \left( \sqrt{-
  		H_{\gg}} \right)^{\ast}, \mathcal{D} \left( \sqrt{- H_{\gg}} \right)}
  \nonumber\\
  = & - (e^{{2 W_{>}} } \nabla v, \nabla \phi)_{L^2} + (e^{{2 W_{>}} } Z_{>}
  v, \phi)_{\mathcal{H}^{- 1}, \mathcal{H}^1}\nonumber \\ &+ (e^{{2 W_{>}} } Z_{\leqslant}
  v, \phi)_{L^2}  \label{eqn:vlinear}\\
  & \text{for all compactly supported } \phi \in \mathcal{H}^1 . \nonumber
\end{align}
Analogously we say that $u^{(R)} = e^{W_{>}} v^{(R)}$ is a \tmtextit{weak
solution }to \eqref{eqn:HRlinear} if
\[ (v^{(R)}, \partial_t v^{(R)}) |_{t = 0} = (e^{- W_{>}} u^{(R)}_0, e^{-
   W_{>}} u^{(R)}_1) \]
and
\begin{align}
  (e^{W_{>}} \partial^2_t v^{(R)}, e^{W_{>}} \phi)_{\mathcal{D} \left( \sqrt{-
  H_{\gg}} \right)^{\ast}, \mathcal{D} \left( \sqrt{- H_{\gg}} \right)} = &
  ( H_R e^{W_{>}} v^{(R)}, e^{W_{>}}\phi)_{\mathcal{D} \left( \sqrt{-
  		H_{\gg}} \right)^{\ast}, \mathcal{D} \left( \sqrt{- H_{\gg}} \right)}
  \nonumber\\
  = & - (e^{{2 W_{>}} } \nabla v^{(R)}, \nabla \phi)_{L^2} + (e^{{2 W_{>}} }
  Z_{>} v^{(R)}, \phi)_{\mathcal{H}^{- 1}, \mathcal{H}^1} + \nonumber \\ &+(e^{{2 W_{>}} }
  \chi_{B (R)} Z_{\leqslant} v^{(R)}, \phi)_{L^2}  \label{eqn:vRlinear}\\
  & \text{for all compactly supported } \phi \in \mathcal{H}^1 . \nonumber
\end{align}
\begin{remark}
  Note that the space $\mathcal{D} \left( \sqrt{- H_{\gg}} \right)^{\ast}$,
  which is the dual of the energy space, is the natural space of the terms
  $\partial^2_t u$ and $\partial^2_t u^{(R)}$ and indeed one can readily show
  that the solutions from Section \ref{sec:trunc} satisfy this property.
\end{remark}

Now we give the main result which says that the linear equation has finite
speed of propagation implying that solutions to \eqref{eqn:vRlinear} are
actually local solutions to \eqref{eqn:vlinear}.

\begin{theorem}
  \label{thm:linear}Let $u^{(R)}, u^{(L)}$ be solutions to
  \begin{eqnarray*}
    \partial^2_t u^{(i)} - H_i u^{(i)} & = & 0\\
    (u^{(i)}, \partial_t u^{(i)}) |_{t = 0} & = & (u^{(i)}_0, u^{(i)}_1)
  \end{eqnarray*}
  for $i = R, L$ and $R \geqslant L \gg 0$. Moreover we choose the initial
  data to satisfy $(u^{(i)}_0, u^{(i)}_1) \in \mathcal{D} \left( \sqrt{-
  H_{\gg}} \right) \times L^2$ and
  \[ (u^{(R)}_0, u^{(R)}_1) = (u^{(L)}_0, u^{(L)}_1)  \text{ on } B (2 L + 1) .
  \]
  Then we have
  \begin{equation}
    u^{(R)} (t, x) = u^{(L)} (t, x)  \text{for all}  (t, x)  \text{s.t. }
    \mathfrak{C}_{(t, x)} \subset \left[ 0, \frac{L}{2} \right] \times B (L),
  \end{equation}
  where the backward light-cone $\mathfrak{C}_{(t, x)}$ at the space-time
  point $(t, x)$ is defined as
  \begin{equation}
    \mathfrak{C}_{(t, x)} \assign \left\{ (s, y) \in \mathbb{R} \times
    \mathbb{R}^3 : 0 \leqslant s \leqslant t \text{and} \quad y \in B (x, 2 (t
    - s)) \right\} .
  \end{equation}
  Moreover, we have the following local bounds for the solutions
  \[ e_{(t, x)}^{(i)} (s) \leqslant C (\Xi, i) e_{(t, x)}^{(i)} (0) = C (\Xi,
     i) \int_{\mathbb{R}^d} \varphi_{(t, x)} (0) \frac{1}{2} e^{2 W_{>}} (|
     v^{(i)}_1 |^2 + | \nabla v^{(i)}_0 |^2 +\mathbf{C} (\Xi, i) | v^{(i)}_0
     |^2 - | v^{(i)}_0 |^2 Z_{>}) \]
  for suitable constants $C (i, \Xi), \mathbf{C} (\Xi, i) > 0$, having defined
  the exponentially transformed solution and initial data as
  \begin{equation}
    v^{(i)} \assign e^{- W_{>}} u^{(i)}  \text{ and } v_j^{(i)} \assign e^{-
    W_{>}} u_j^{(i)}  \text{ for } i = L, R \text{ and } j = 0, 1
    \label{def:linvi}
  \end{equation}
  and the appropriate local energy quantity as
  \begin{equation}
    e_{(t, x)}^{(i)} (s) \assign \int_{\mathbb{R}^d} \varphi_{(t, x)} (s)
    \frac{1}{2} e^{2 W_{>}} (| \partial_t v^{(i)} (s) |^2 + | \nabla v^{(i)}
    (s) |^2 +\mathbf{C} (\Xi, i) | v^{(i)} (s) |^2 - | v^{(i)} (s) |^2 Z_{>}),
    \label{eqn:locen}
  \end{equation}
  for $i = L, R.$ The bump function $\varphi$ is the one defined in
  \eqref{eqn:defphi} setting $\tmmathbf{c}= 2$.
\end{theorem}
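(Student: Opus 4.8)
The plan is to run Tartar's localised-energy argument, reviewed above, on the \emph{exponentially transformed} equation for $v^{(i)} = e^{-W_{>}}u^{(i)}$, using the weighted local energy \eqref{eqn:locen} in place of the classical one. First I would fix the constant $\mathbf{C}(\Xi,i)$ large enough that the integrand is nonnegative: by Lemma \ref{lem:stocprod} the localised product $\psi e^{2W_{>}}Z_{>}$ lies in $\mathcal{C}^{-\frac12-\varepsilon}$, so pairing it against $|v|^2\in B^{\frac12+\varepsilon}_{1,1}$ (Besov duality) and using Young's inequality exactly as in the proof of Lemma \ref{lem:3ops} shows that, for $\mathbf{C}$ depending only on the noise norms, the quantity $e^{2W_{>}}(|\nabla v|^2 + \mathbf{C}|v|^2 - |v|^2 Z_{>})$ controls (from above and below) the localised $\|e^{W_{>}}v\|_{\mathcal{H}^1}^2$; in particular $e_{(t,x)}^{(i)}(s)\geqslant 0$ and is comparable to the localised energy norm.

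Next I would differentiate $e_{(t,x)}^{(i)}(s)$ in $s$. The derivative splits into a group where $\partial_s\varphi$ hits the quadratic integrand and a group where the $s$-derivative hits $v^{(i)}$; into the latter I insert the weak formulation \eqref{eqn:vRlinear} with the compactly supported test function $\phi = \varphi_{(t,x)}(s)\,\partial_t v^{(i)}(s)$. This choice produces two exact cancellations: the term $\int\varphi\, e^{2W_{>}}\nabla v\cdot\nabla\partial_t v$ coming from the energy cancels the corresponding piece of $-(e^{2W_{>}}\nabla v,\nabla(\varphi\,\partial_t v))_{L^2}$, and the rough term $\int\varphi\, e^{2W_{>}}Z_{>}v\,\partial_t v$ cancels $(e^{2W_{>}}Z_{>}v,\varphi\,\partial_t v)_{\mathcal{H}^{-1},\mathcal{H}^1}$. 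What survives, suppressing the superscripts $(i)$, is
\begin{align*}
  \frac{d}{ds}e_{(t,x)}^{(i)}(s) = {} & \tfrac12\int\partial_s\varphi\, e^{2W_{>}}(|\partial_t v|^2 + |\nabla v|^2 + \mathbf{C}|v|^2 - |v|^2 Z_{>}) - \int e^{2W_{>}}\nabla\varphi\,\partial_t v\cdot\nabla v \\
  & + \int\varphi\, e^{2W_{>}}(\chi_{B(i)}Z_{\leqslant} + \mathbf{C})\, v\,\partial_t v.
\end{align*}

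The finite-speed mechanism now enters exactly via \eqref{phider}: since $\tmmathbf{c}=2$ one has $\partial_s\varphi = -2|\nabla\varphi|\leqslant 0$, so estimating the mixed term by Young's inequality, $-\int e^{2W_{>}}\nabla\varphi\,\partial_t v\cdot\nabla v \leqslant \int e^{2W_{>}}|\nabla\varphi|\tfrac12(|\partial_t v|^2+|\nabla v|^2)$, leaves the combination $\tfrac12(\partial_s\varphi+|\nabla\varphi|) = -\tfrac12|\nabla\varphi|\leqslant 0$ in front of $|\partial_t v|^2+|\nabla v|^2$; this contribution, together with $\tfrac12\int\partial_s\varphi\, e^{2W_{>}}\mathbf{C}|v|^2\leqslant 0$, I simply discard. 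For the remainder I would use that $\chi_{B(i)}Z_{\leqslant}\in L^{\infty}$ with norm growing polynomially in $i$ by \eqref{eqn:Rpol}, and that the surviving rough piece $\tfrac12\int|\partial_s\varphi|\, e^{2W_{>}}|v|^2 Z_{>}$ is bounded in absolute value, via Lemma \ref{lem:stocprod} and Besov duality, by $C(\Xi,i)\,e_{(t,x)}^{(i)}(s)$; with Young's inequality on the $\mathbf{C}\,v\,\partial_t v$ piece this yields $\frac{d}{ds}e_{(t,x)}^{(i)}(s)\leqslant C(\Xi,i)\,e_{(t,x)}^{(i)}(s)$, and Gronwall on $s\in[0,t]$, $t\leqslant \tfrac{L}{2}$, gives the stated bound $e_{(t,x)}^{(i)}(s)\leqslant C(\Xi,i)\,e_{(t,x)}^{(i)}(0)$.

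Finally, for the comparison I would apply this estimate to $w\assign u^{(R)}-u^{(L)} = e^{W_{>}}(v^{(R)}-v^{(L)})$, which solves the $H_R$-equation with source $(\chi_{B(R)}-\chi_{B(L)})Z_{\leqslant}u^{(L)}$, supported in $\{|y|\geqslant L\}$ since $R\geqslant L$. For a point with $\mathfrak{C}_{(t,x)}\subset[0,\tfrac{L}{2}]\times B(L)$, the constants ($\tmmathbf{c}=2$, $t\leqslant \tfrac{L}{2}$, agreement of the data on the generous ball $B(2L+1)$) are chosen so that the support of $\varphi_{(t,x)}(s)$ stays inside $B(2L+1)$, where the data coincide, giving $e^{(w)}_{(t,x)}(0)=0$, and so that on the relevant cone region the two truncations $\chi_{B(R)}Z_{\leqslant}$ and $\chi_{B(L)}Z_{\leqslant}$ coincide, so the source drops out of the energy identity; the forced Gronwall inequality for $w$ then has trivial right-hand side and forces $e^{(w)}_{(t,x)}\equiv 0$, i.e.\ $u^{(R)}=u^{(L)}$ on $\mathfrak{C}_{(t,x)}$. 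I expect the main obstacle to be the rigorous treatment of the rough potential $Z_{>}$: both the very definition of the energy through the duality pairing of $|v|^2$ with $e^{2W_{>}}Z_{>}$ and the two exact cancellations above must be justified at the level of weak solutions, where $\phi=\varphi\,\partial_t v$ is only $L^2$ rather than $\mathcal{H}^1$. This forces a regularisation (smoothing the noise and the test function) with passage to the limit using the convergence of the stochastic objects from Theorem \ref{thm:noise} and the product estimate of Lemma \ref{lem:stocprod}; the accompanying geometric bookkeeping, ensuring the width-one annulus of $\varphi$ stays within $B(2L+1)$ and away from the source, is routine but must be tracked against the chosen constants.
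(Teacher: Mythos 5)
Your overall architecture matches the paper's proof: Tartar's localised energy for the transformed variable $v^{(i)}=e^{-W_{>}}u^{(i)}$, the test function $\phi=\varphi_{(t,x)}(s)\,\partial_t v^{(i)}(s)$, the cancellations, the Gronwall step, and the concluding difference argument for $d=v^{(R)}-v^{(L)}$ are all as in the paper. However, there is a genuine gap at the single most delicate point of the argument: your treatment of the rough annulus term $\int \chi_A\, e^{2W_{>}}Z_{>}|v^{(i)}(s)|^2$ with $A=\{|y-x|-2(t-s)\in[0,1]\}$, which arises from $\partial_s\varphi$ hitting the $-|v|^2Z_{>}$ part of the integrand. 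You claim this term is bounded by $C(\Xi,i)\,e^{(i)}_{(t,x)}(s)$ and then \emph{discard} the negative surplus $-\tfrac12\int|\nabla\varphi|\,e^{2W_{>}}(|\partial_t v|^2+|\nabla v|^2)$ left over after Young's inequality. This cannot work as stated: estimating the duality pairing $(\chi_A|v|^2,e^{2W_{>}}Z_{>})_{B^{-1/2-\varepsilon}_{\infty,\infty},B^{1/2+\varepsilon}_{1,1}}$ requires, after fractional Leibniz, interpolation and Young, a bound of the form $\delta\int_A|\nabla v|^2+C_\delta\int_A|v|^2$, and these are \emph{unweighted} integrals over the full annulus $A$. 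Since $\varphi$ decays linearly to zero across $A$, neither $\int_A|\nabla v|^2$ nor $\int_A|v|^2$ is dominated by the $\varphi$-weighted quantities inside $e^{(i)}_{(t,x)}(s)$, so the claimed Gronwall absorption fails near the outer edge of the annulus. The paper's resolution is precisely to \emph{keep} the negative annulus term: with $\tmmathbf{c}=2$ the combination of $\partial_s\varphi=-2\chi_A$ and Young's inequality leaves $-\tfrac34\int_A e^{2W_{>}}|\nabla v|^2$ (and $-\mathbf{C}\int_A e^{2W_{>}}|v|^2$) on the annulus, into which the $\tfrac14\int_A e^{2W_{>}}|\nabla v|^2+C(A,\Xi)\int_A e^{2W_{>}}|v|^2$ produced by the duality estimate is absorbed, after taking $\mathbf{C}(\Xi,i)$ large. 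This is also exactly why $\tmmathbf{c}>1$ is needed; with $\tmmathbf{c}=1$ there would be no surplus to absorb the rough term.

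A secondary omission: the product $\chi_A|v|^2$ must be estimated in the positive-regularity space $B^{1/2+\varepsilon}_{1,1}$, and multiplication by the rough indicator $\chi_A$ is not innocuous — the paper needs the Besov regularity $\chi_A\in B^{1/p}_{p,\infty}$ of indicator functions of sets of finite perimeter (Lemma \ref{lem:charreg}) together with the extension/restriction operators of Proposition \ref{prop:ext} and a check that the resulting constants are uniform in $s\in[0,t]$ and independent of $x$. You correctly flag the weak-solution/regularisation issue for the test function, but this indicator-multiplication step is a separate and essential ingredient that your sketch does not supply.
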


\begin{proof}
  We follow the general method of Tartar, see {\cite{tartar}}, which was
  sketched above. Using the exponential transform, we rewrite the equations
  for $u^{(R)}$ and $u^{(L)}$ instead as equations for $v^{(R)}$ and $v^{(L)}$
  which are given by \eqref{eqn:vRlinear} in weak form.
  
  We now consider the local energy quantity \eqref{eqn:locen} which is of
  course inspired by \eqref{etartar} and suppress the $(t, x)$ and the $i$
  dependence of $e$ for ease of notation. Moreover, we have added a (large)
  $L^2$ term --namely $\mathbf{C} (\Xi, i)$ -- which does not come from the
  equation but makes it uniformly positive i.e we have
  \[ e (s) = 0 \quad \text{implies } v^{(i)} = 0 \text{ in }
     \textrm{\tmop{supp}} (\varphi_{(t, x)} (s)), \]
  see Definition \ref{def:3ops}.
  
  Note that this is for now only formal, since the term
  \[ \int_{\mathbb{R}^3} \varphi_{(t, x)} (s) \frac{1}{2} e^{2 W_{>}} |
     v^{(i)} (s) |^2 Z_{>} \]
  is not an honest integral but rather should be thought of as a pairing like
  \[ \left( \varphi_{(t, x)} (s) \frac{1}{2} e^{2 W_{>}} Z_{>}, | v^{(i)} (s)
     |^2 \right)_{{B_{\infty . \infty}^{- \frac{1}{2} - \varepsilon}} , B_{1,
     1}^{\frac{1}{2} + \varepsilon}} \]
  for $\varepsilon > 0$ small, see Lemma \ref{lem:stocprod} for why the first
  term is in fact in \ ${B_{\infty . \infty}^{- \frac{1}{2} - \varepsilon}} 
  =\mathcal{C}^{- \frac{1}{2} - \varepsilon} .$ For the right-hand side we
  invoke Lemmas \ref{lem:leib} and \ref{lem:embedding} in order to bound
  \begin{align}
    {\| | v^{(i)} (s) |^2 \|_{B_{1, 1}^{\frac{1}{2} + \varepsilon}}} 
    \leqslant C \| v^{(i)} (s) \|_{L^2} \| v^{(i)} (s)
    \|_{\mathcal{H}^{\frac{1}{2} + 2 \varepsilon}}  \leqslant & C \| v^{(i)}
    (s) \|^2_{\mathcal{H}^{\frac{3}{4}}} \nonumber\\
     \leqslant & \delta \| e^{W_{>}} \nabla v^{(i)} (s) \|^2_{L^2} + C
    (\delta, \Xi) \| e^{W_{>}} v^{(i)} (s) \|^2_{L^2},  \label{lowerorder1}
  \end{align}
  for small $\delta > 0,$ interpolating in the $\mathcal{H}^{\sigma} -$scale
  and applying Young's inequality, showing that this term is not only
  well-defined but also ``lower-order'' with respect to the gradient term. In
  light of this computation, we will continue to make this mild abuse of
  notation.
  
  Analogously to the strategy above, we compute the derivative $\frac{d}{d s}
  e (s)$ in order to make a Gronwall argument. This yields
  \begin{align}
    \frac{d}{d s} e (s)  = & \int_{\mathbb{R}^3} \frac{d}{d s} \varphi_{(t,
    x)} (s) \frac{1}{2} e^{2 W_{>}} (| \partial_t v^{(i)} (s) |^2 + | \nabla
    v^{(i)} (s) |^2 + | v^{(i)} (s) |^2 (\mathbf{C} (\Xi, i) - Z_{>})) +\nonumber \\&+
    \int_{\mathbb{R}^3} \varphi_{(t, x)} (s) e^{2 W_{>}} \partial_t v^{(i)}
    (s) (\partial^2_t v^{(i)} (s) - v^{(i)} (s) Z_{>} - \chi_{B (R)}
    Z_{\leqslant} v^{(i)} (s) + \chi_{B (R)} Z_{\leqslant} v^{(i)} (s)) +\nonumber \\&+
    \int_{\mathbb{R}^3} \varphi_{(t, x)} (s) e^{2 W_{>}} \nabla v^{(i)} (s)
    \nabla \partial_t v^{(i)} (s) +\mathbf{C} (\Xi, R) \int_{\mathbb{R}^3}
    \varphi_{(t, x)} (s) e^{2 W_{>}} \partial_t v^{(i)} (s) 
    v^{(i)} (s) \nonumber\\
     =  &- \int_{\mathbb{R}^3} 2 \chi_{| \cdummy - x | - 2 (t - s) \in [0,
    1]} \frac{1}{2} e^{2 W_{>}} (| \partial_t v^{(i)} (s) |^2 + | \nabla
    v^{(i)} (s) |^2 + | v^{(i)} (s) |^2 (\mathbf{C} (\Xi, i) - Z_{>}))+\nonumber \\& +
    \int_{\mathbb{R}^3} \varphi_{(t, x)} (s) e^{2 W_{>}} \partial_t v^{(i)}
    (s) \underset{= 0}{\underbrace{(\partial^2_t v^{(i)} (s) - e^{- W_{>}} H_R
    v^{(i)} (s))}} +\nonumber \\&+ \int_{\mathbb{R}^3} \varphi_{(t, x)} (s) e^{2 W_{>}}
    \partial_t v^{(i)} (s) (\chi_{B (i)} Z_{\leqslant} +\mathbf{C} (\Xi, i))
    v^{(i)} (s) - \int_{\mathbb{R}^3} \nabla \varphi_{(t, x)} (s) e^{2 W_{>}}
    \nabla v^{(i)} (s) \partial_t v^{(i)} (s) \nonumber\\
     \leqslant & - \int_{\mathbb{R}^3} \chi_{| \cdummy - x | - 2 (t - s) \in
    [0, 1]} e^{2 W_{>}} (| \partial_t v^{(i)} (s) |^2 + | \nabla v^{(i)} (s)
    |^2 + | v^{(i)} (s) |^2 (\mathbf{C} (\Xi, i) - Z_{>})) +\nonumber \\&+
    \int_{\mathbb{R}^3} | \nabla \varphi_{(t, x)} (s) | e^{2 W_{>}} \left(
    \frac{1}{4} | \nabla v^{(i)} (s) |^2 + | \partial_t v^{(i)} (s) |^2
    \right) +\nonumber \\&+ (\| \chi_{B (i)} Z_{\leqslant} \|_{L^{\infty}} +\mathbf{C} (\Xi,
    i)) \int_{\mathbb{R}^3} \varphi_{(t, x)} (s) e^{2 W_{>}} \frac{1}{2} (|
    \partial_t v^{(i)} (s) |^2 + | v^{(i)} (s) |^2) \nonumber\\
     = & - \int_{\mathbb{R}^3} \chi_{| \cdummy - x | - 2 (t - s) \in [0, 1]}
    e^{2 W_{>}} \left( \frac{3}{4} | \nabla v^{(i)} (s) |^2 + (\mathbf{C}
    (\Xi, i) - Z_{>}) | v^{(i)} (s) |^2 \right) +\nonumber \\&+ (\| \chi_{B (i)}
    Z_{\leqslant} \|_{L^{\infty}} +\mathbf{C} (\Xi, i)) e (s), 
    \label{eqn:3/4}
  \end{align}
  where we have used \eqref{phider}, integration by parts and Young's
  inequality.
  
  Now we want to conclude by arguing that the term
  \begin{equation}
    \int_{\mathbb{R}^3} \chi_{| \cdummy - x | - 2 (t - s) \in [0, 1]} e^{2
    W_{>}} Z_{>} | v^{(i)} |^2 (s) \label{eqn:Zterm}
  \end{equation}
  can be absorbed by the other two using the fact that we may freely choose
  $\mathbf{C} (\Xi, i)$ depending on the norm of $\Xi$ by an argument similar
  to \eqref{lowerorder1}.
  
  We however need one trick to proceed, since the term in the right-hand side
  of the bound \eqref{lowerorder1} can not be controlled by the terms we have.
  Recall that we have a bounded restriction and an extension operator on Besov
  spaces $\cdummy |_A$ and $E_A$ for nice sets $A \subset \mathbb{R}^d$, see
  Proposition \ref{prop:ext}.
  
  Generally we may bound the product with an indicator function in the
  following way using Lemma \ref{lem:leib} and Lemma \ref{lem:embedding} and
  the restriction/extensions from Proposition \ref{prop:ext}
  \begin{eqnarray*}
    \| \chi_A f \|_{B_{1, 1}^{\alpha} (\mathbb{R}^3)} & = & \| \chi_A E_A
    (f|_A) \|_{B_{1, 1}^{\alpha} (\mathbb{R}^3)}\\
    & \lesssim & \| \chi_A \|_{B_{p, 1}^{\alpha} (\mathbb{R}^3)} \| E_A f|_A
    \|_{L^q (\mathbb{R}^3)} + \| E_A f|_A \|_{B_{r, 1}^{\alpha}
    (\mathbb{R}^d)} \| \chi_A \|_{L^{\rho} (\mathbb{R}^3)}\\
    & \lesssim & \| E_A f|_A \|_{B_{r, 1}^{\alpha} (\mathbb{R}^d)} (\| \chi_A
    \|_{B_{p, 1}^{\alpha} (\mathbb{R}^d)} + \| \chi_A \|_{L^{\rho}
    (\mathbb{R}^3)})\\
    & \lesssim & \| f|_A \|_{B_{r, 1}^{\alpha} (A)} (\| \chi_A \|_{B_{p,
    1}^{\alpha} (\mathbb{R}^d)} + \| \chi_A \|_{L^{\rho} (\mathbb{R}^3)})\\
    & \text{for} & \\
    1 & = & \frac{1}{p} + \frac{1}{q} = \frac{1}{r} + \frac{1}{\rho},\\
    q & < & \frac{3 r}{3 - \alpha r},
  \end{eqnarray*}
  where the terms involving $\chi_A$ are finite for $A \subset \mathbb{R}^3$
  with finite perimeter and $\alpha < \frac{1}{p}$ by Lemma \ref{lem:charreg}
  and Lemma \ref{lem:embedding}.
  
  \
  
  In fact, we bound the term \eqref{eqn:Zterm} as follows. We set
  \[ A \assign \{ y \in \mathbb{R}^3 : | y - x | - 2 (t - s) \in [0, 1] \},
  \]
  and taking suitably small $\varepsilon, \tilde{\varepsilon}, \delta > 0$ we
  proceed as above
  \begin{eqnarray}
    | (\chi_A | v^{(i)} (s) |^2, e^{2 W_{>}} Z_{>}) | & \leqslant & \| e^{2
    W_{>}} Z_{>} \|_{B_{\infty, \infty}^{- \frac{1}{2} -
    \frac{\varepsilon}{4}}} \| \chi_A | v^{(i)} (s) |^2 \|_{B_{1,
    1}^{\frac{1}{2} + \frac{\varepsilon}{4}}} \nonumber\\
    & = & \| e^{2 W_{>}} Z_{>} \|_{B_{\infty, \infty}^{- \frac{1}{2} -
    \frac{\varepsilon}{4}}} \| \chi_A E_A (| v^{(i)} (s) |^2) |_A \|_{B_{1,
    1}^{\frac{1}{2} + \frac{\varepsilon}{4}}} \nonumber\\
    & \leqslant & C \| e^{2 W_{>}} Z_{>} \|_{B_{\infty, \infty}^{-
    \frac{1}{2} - \frac{\varepsilon}{4}}} \Big( \| \chi_A \|_{B_{\frac{2}{1 +
    \varepsilon}, 1}^{\frac{1}{2} + \frac{\varepsilon}{4}} (\mathbb{R}^3)} \|
    E_A (| v^{(i)} (s) |^2) |_A \|_{L^{\frac{2}{1 - \varepsilon}}} +\\
    &&+ \| E_A (|
    v^{(i)} (s) |^2)_A \|_{B_{\frac{1}{1 - \delta}, 1}^{\frac{1}{2} +
    \frac{\varepsilon}{4}} (\mathbb{R}^3)} \| \chi_A \|_{L^{\frac{1}{\delta}}}
    \Big) \nonumber\\
    & \leqslant & C (A, \Xi) \left( \| E_A (| v^{(i)} (s) |^2) |_A
    \|_{L^{\frac{2}{1 - \varepsilon}} (\mathbb{R}^3)} + \| E_A (| v^{(i)} (s)
    |^2) |_A \|_{B_{\frac{1}{1 - \delta}, 1}^{\frac{1}{2} +
    \frac{\varepsilon}{4}} (\mathbb{R}^3)} \right) \nonumber\\
    & \leqslant & C (A, \Xi) \left( \| v^{(i)} (s) |_A  \|^2_{L^{\frac{4}{1 -
    \varepsilon}} (A)} + \| (v^{(i)} (s))^2 \|_{B_{\frac{1}{1 - \delta},
    1}^{\frac{1}{2} + \frac{\varepsilon}{4}} (A)} \right) \nonumber\\
    & \leqslant & C (A, \Xi) \left( \| v^{(R)} (s) \|^2_{L^{\frac{4}{1 -
    \varepsilon}} (A)} + \| v^{(R)} (s) \|^2_{B_{\frac{2}{1 - \delta},
    2}^{\frac{1 + \varepsilon}{2}} (A)} \right) \nonumber\\
    & \leqslant & C (A, \Xi) \left( \| v^{(i)} (s) 
    \|^2_{{\mathcal{H}^{\frac{3}{4} + \tilde{\varepsilon}}}  (A)} + \| v^{(i)}
    (s) \|^2_{\mathcal{H}^{\frac{1}{2} + \tilde{\varepsilon}} (A)} \right)
    \nonumber\\
    & \leqslant & \frac{1}{4 \| e^{- 2 W_{>}} \|_{L^{\infty}}} \int_A |
    \nabla v^{(i)} (s) |^2 + C (A, \Xi) \int_A | v^{(i)} (s) |^2 \nonumber\\
    & \leqslant & \frac{1}{4} \int_A e^{2 W_{>}} | \nabla v^{(i)} (s) |^2 + C
    (A, \Xi) \int_A e^{2 W_{>}} | v^{(i)} (s) |^2 .  \label{eqn:1/8}
  \end{eqnarray}
  Now we observe that the dependence of the constant in $A$ can be chosen
  uniformly in $s \in [0, t]$ and does not depend on the point $x$ at all.
  Instead this will result in an $i$ dependent constant. More precisely, by
  translation invariance one can see that the norms do not depend on the
  spatial variable $x$ and to see that one may choose it independent of the
  time integration parameter $s$ one observes that the function $s \rightarrow
  \| \chi_{| \cdummy - x | - 2 (t - s) \in [0, 1]} \|_{B_{\frac{2}{1 +
  \varepsilon}, 1}^{\frac{1}{2} + \frac{\varepsilon}{4}} (\mathbb{R}^3)}$ is
  continuous on $s \in [0, t]$ and bounded at the end points(see Lemma
  \ref{lem:charreg}) hence is bounded on the whole interval. Lastly, since we
  only consider times $t \leqslant i$ we have that the constant can be chosen
  to depend on $R$ and we relabel it as
  \[ c (\Xi, i) = C (A, \Xi) . \]

  If we insert the bound \eqref{eqn:1/8} into \eqref{eqn:3/4} we get after
  choosing the constant $\mathbf{C} (\Xi, i)$ sufficiently large
  \begin{eqnarray}
    \frac{d}{d s} e (s) & \leqslant & \int_{\mathbb{R}^d} \chi_{| \cdummy - x
    | - 2 (t - s) \in [0, 1]} \frac{1}{2} e^{2 W_{>}} \Bigg(
    \underset{\leqslant 0}{\underbrace{\left( \frac{1}{4} - \frac{3}{4}
    \right)}} | \nabla v^{(i)} (s) |^2 + \underset{\overset{!}{\leqslant}
    0}{\underbrace{(c (\Xi, i) -\mathbf{C} (\Xi, i))}} | v^{(R)} (s) |^2
    \Bigg)+ \nonumber\\ &&+ (\| \chi_{B (R)} Z_{\leqslant} \|_{L^{\infty}} +\mathbf{C} (\Xi,
    i)) e (s) \nonumber\\
    & \leqslant & (\| \chi_{B (R)} Z_{\leqslant} \|_{L^{\infty}} +\mathbf{C}
    (\Xi, i)) e (s)  \label{eqn:ebound}
  \end{eqnarray}
  which by Gronwall implies
  \[ e (s) \leqslant C (i, \Xi) e (0) = C (i, \Xi) \int_{\mathbb{R}^d}
     \varphi_{(t, x)} (0) \frac{1}{2} e^{2 W_{>}} (| v^{(i)}_1 |^2 + | \nabla
     v^{(i)}_0 |^2 +\mathbf{C} (\Xi, i) | v^{(i)}_0 |^2 - | v^{(i)}_0 |^2
     Z_{>}) . \]
  This in particular implies that $v^{(i)}$ is controlled in points inside the
  backwards light cone by the initial conditions in the support of
  $\varphi_{(t, x)} (0)$, i.e. a ball around $x.$ Moreover this implies for
  two different parameters
  \[ 0 \ll L \leqslant R \]
  that the solutions $v^{(L)}$ and $v^{(R)}$ to \eqref{eqn:HRlinear} with the
  same initial data $(v_0, v_1)$ actually agree in the backward light-cones
  which are contained in $[0, \frac{L}{2}] \times B ( L)$.
  
  In order to make this precise, we observe that the difference
  \[ d \assign v^{(L)} - v^{(R)} \]
  solves the equation
  \begin{eqnarray*}
    (\partial^2_t - H_L) d & = & \underset{= 0 \text{in } B_L
    (0)}{\underbrace{(\chi_{B (R)} Z_{\leqslant} - \chi_{B (L)}
    Z_{\leqslant})}} v^{(R)}\\
    (d, \partial_t d) |_{t = 0} & = & (0, 0) .
  \end{eqnarray*}
  Thus the above argument applied to points $(t, x)$ for which $\tmop{supp}
  (\varphi_{(t, x)} (s)) \subset B_L$ for all $0 \leqslant s \leqslant t$
  gives
  \begin{eqnarray*}
    0 & \leqslant & \int_{\mathbb{R}^d} \varphi_{(t, x)} (s) e^{2 W_{>}} | d
    (s) |^2\\
    & \leqslant & \int_{\mathbb{R}^d} \frac{1}{2} e^{2 W_{>}} \varphi_{(t,
    x)} (s) (| \partial_t d (s) |^2 + | \nabla d (s) |^2 + | d (s) |^2
    (\mathbf{C} (\Xi, R) - Z_{>}))\\
    & \leqslant & C (\Xi, R) \int_{\mathbb{R}^d} \varphi_{(t, x)} (0) (|
    \partial_t d (0) |^2 + | \nabla d (0) |^2 + (\mathbf{C} (\Xi, R) - Z_{>})
    | d (0) |^2)\\
    & = & 0,
  \end{eqnarray*}
  which implies that $d \equiv 0$ in that region. This finishes the proof.
\end{proof}

\begin{remark}\label{rem:max}
Shortly before completion, it was pointed out to the author by Massimilano Gubinelli that one could alternatively prove the finite speed of propagation of the multiplicative stochastic wave equation by approximating the equation by regularising the noise and localising\begin{align}
	\partial_t^2u-H^{loc}_\varepsilon u=0 \label{eqn:wavemax} \\
	(u,\partial_tu)|_{t=0}=(u_0^\varepsilon,u_1^\varepsilon),
\end{align}
where $H^{loc}_\varepsilon=\Delta +\underbrace{\Xi^{loc}_\varepsilon}_{\text{smooth} }$ is some suitable regular and localised approximation to the Anderson Hamiltonian which in particular should be self-adjoint and semibounded. This equation then has unit speed of propagation by the classical proof above, meaning for every $t,\varepsilon>0$ and $x\in\mathbb{R}^3$ 
\begin{align}
\chi_{B  (x, t - s)} S^{loc}_\varepsilon (s) (u_0^\varepsilon,u_1^\varepsilon)= S^{loc}_\varepsilon (s) (\chi_{B  (x, t)}u_0^\varepsilon,\chi_{B  (x, t )}u_1^\varepsilon) \text{ for any } 0<s<t, \label{eqn:finitemax}
\end{align}
where $ S^{loc}_\varepsilon$ is the propagator of the equation \eqref{eqn:wavemax}. If one then proves the strong resolvent convergence of the operators $H^{loc}_\varepsilon$ to a localised version of the Anderson Hamiltonian as $\varepsilon\to0$, one gets that the associated propagators converge strongly in $L^2$, cf Section 3.3 in \cite{GUZ}. This convergence would imply that the identity \eqref{eqn:finitemax} passes to the limit as $\varepsilon\to0$ meaning we have unit speed of propagation with a localisation which can be removed since we are interested in a local property.\\
This approach, however, does not immediately give us bounds on the local energies like in the approach in the current work. Also it is not immediately clear how one would prove the analogous result in the nonlinear case.
\end{remark}

\section{Putting it all together}\label{sec:cubicfinite}

Finally we want to apply Theorem \ref{thm:linear} also to semilinear wave
equations which are formally
\begin{equation}
  \left\{\begin{array}{lll}
    \partial^2_t u - H u &= & - u^3  \text{ on } \mathbb{R}_+ \times
    \mathbb{R}^3\\
    (u, \partial_t u)_{t = 0} &= & (u_0, u_1)  
  \end{array}\right. \label{eqn:cubicformal}
\end{equation}
whose solutions should be suitable limits of the solutions $u^{(R)}$ to
\eqref{eqn:NLWtrunc}.

Analogously to before, we introduce the weak formulation of both the ``full''
PDE and its truncated version, which we have solved in Theorem
\ref{thm:trunc}.

We say $u = e^{W_{>}} v$ is a \tmtextit{weak solution }to
\eqref{eqn:cubicformal} if
\begin{align}
  (e^{W_{>}} \partial^2_t v, e^{W_{>}} \phi)_{\mathcal{D} \left( \sqrt{-
  H_{\gg}} \right)^{\ast}, \mathcal{D} \left( \sqrt{- H_{\gg}} \right)} = &
  ( H e^{W_{>}} v,e^{W_{>}} \phi)_{\mathcal{D} \left( \sqrt{-
  		H_{\gg}} \right)^{\ast}, \mathcal{D} \left( \sqrt{- H_{\gg}} \right)} - (e^{4
  W_{>}} v^3, \phi)_{L^2} \nonumber\\
  = & - (e^{{2 W_{>}} } \nabla v, \nabla \phi)_{L^2} + (e^{{2 W_{>}} } Z_{>}
  v, \phi)_{\mathcal{H}^{- 1}, \mathcal{H}^1} +\nonumber \\&+ (e^{{2 W_{>}} } Z_{\leqslant}
  v, \phi)_{L^2} - (e^{4 W_{>}} v^3, \phi)_{L^2}  \label{eqn:weakcubic}\\
  & \text{for all compactly supported } \phi \in \mathcal{H}^1 . \nonumber\\
  & \text{and} \nonumber\\
  (v, \partial_t v)_{t = 0} = & (e^{- W_{>}} u_0, e^{- W_{>}} u_1) \nonumber
\end{align}
Analogously we say that $u^{(R)} = e^{W_{>}} v^{(R)}$ is a \tmtextit{weak
solution }to \eqref{eqn:NLWtrunc}  if
\begin{align}
  (e^{W_{>}} \partial^2_t v^{(R)}, e^{W_{>}} \phi)_{\mathcal{D} \left( \sqrt{-
  H_{\gg}} \right)^{\ast}, \mathcal{D} \left( \sqrt{- H_{\gg}} \right)} = &
  ( H_R e^{W_{>}} v^{(R)}, e^{W_{>}}\phi)_{\mathcal{D} \left( \sqrt{-
  		H_{\gg}} \right)^{\ast}, \mathcal{D} \left( \sqrt{- H_{\gg}} \right)}- (e^{4
  	W_{>}} v^3, \phi)_{L^2}
  \nonumber\\
  = & - (e^{{2 W_{>}} } \nabla v^{(R)}, \nabla \phi)_{L^2} + (Z_{>} v^{(R)},
  \phi)_{\mathcal{H}^{- 1}, \mathcal{H}^1}+\nonumber \\ & + (\chi_{B (0, R)} Z_{\leqslant}
  v^{(R)}, \phi)_{L^2}- (e^{4 W_{>}} v^3, \phi)_{L^2} 
  \\
  & \text{for all compactly supported } \phi \in \mathcal{H}^1 . \nonumber
  \\
  & \text{and} \nonumber\\
  (v^{(R)}, \partial_t v^{(R)})_{t = 0} = & (e^{- W_{>}} u^{(R)}_0, e^{-
  W_{>}} u^{(R)}_1) \nonumber
\end{align}
We wish to prove an analogous bound to \eqref{eqn:ebound} for the nonlinear
equation. In fact we get the following result which extends the finite speed
of propagation argument to the semilinear case. Since the nonlinearity is
controlled by the energy, this is essentially like Theorem \ref{thm:linear}
with some modifications.

\begin{theorem}[Finite speed of propagation for the cubic multiplicative
stochastic wave equation]
  \label{thm:finitecubic}
  
  Let $R \geqslant L \gg 0$ and $u^{(i)} = e^{W_{>}} v^{(i)}$ be the solutions
  to
  \begin{eqnarray}
    \partial^2_t u^{(i)} - H_i u^{(i)} & = & - u^{(i)} | u^{(i)} |^2  \text{on
    } [0, T] \times \mathbb{R}^3  \label{eqn:NLWthm}\\
    (u, \partial_t u) & = & (u_0, u_1) \in \mathcal{D} \left( \sqrt{- H_{\gg}}
    \right) \times L^2, \nonumber
  \end{eqnarray}
  from Theorem \ref{thm:trunc} for some T>0 and $i = R, L$. We set
  \[ e^{(i)} (s) \assign \int_{\mathbb{R}^d} \varphi_{(t, x)} (s) \frac{1}{2}
     e^{2 W_{>}} (| \partial_t v^{(i)} (s) |^2 + | \nabla v^{(i)} (s) |^2 + |
     v^{(i)} (s) |^2 (\tmmathbf{C} (\Xi, i) - Z_{>})) \]
  and
  \[ e (s) \assign \int_{\mathbb{R}^d} \varphi_{(t, x)} (s) \frac{1}{2} e^{2
     W_{>}} (| \partial_t b (s) |^2 + | \nabla b (s) |^2 + | b (s) |^2
     (\tmmathbf{C} (\Xi, i) - Z_{>})), \]
  where $b \assign v^{(R)} - v^{(L)}$for large constants $\tmmathbf{C} (\Xi,
  i) > 0$ chosen below.
  
  Then \ there exist constants $c (i, \Xi, u^i_0, u^i_1), c (\Xi, L, R, u^R_0,
  u^R_1, u^L_0, u^L_1) > 0$ for which the bounds
  \begin{align}
    e^{(i)} (s)  \leqslant & c (i, \Xi, u^i_0, u^i_1) e^{(i)} (0) \nonumber\\
     = & c (i, \Xi, u^i_0, u^i_1) \int_{\mathbb{R}^d} \varphi_{t, x} (0)
    \frac{1}{2} e^{2 W_{>}} (| v_1^{(i)} |^2 + | \nabla v_0^{(i)} |^2 + |
    v_0^{(i)} |^2 (\tmmathbf{C} (\Xi, i) - Z_{>}))  \label{ineq:ei}
  \end{align}
  and
  \begin{align}
    e (s)  \leqslant & c (\Xi, L, R, u^R_0, u^R_1, u^L_0, u^L_1) e (0)
    \nonumber\\
    = & c (\Xi, L, R, u^R_0, u^R_1, u^L_0, u^L_1) \int_{\mathbb{R}^d}
    \varphi_{t, x} (0) \frac{1}{2} e^{2 W_{>}} (| b_1 |^2 + | \nabla b_0 |^2 +
    | b_0 |^2 (\tmmathbf{C} (\Xi, R, L) - Z_{>})) 
  \end{align}
  hold for
  \[ 0 \leqslant s \leqslant t \text{ and } x \in \mathbb{R}^d
     \text{ s.t. }\mathfrak{C}_{(t, x)} \subset \left[ 0, \frac{L}{2} \right] \times B
     (L), \]
  where the backward light cone is defined as
  \begin{equation}
    \mathfrak{C}_{(t, x)} \assign \left\{ (s, y) \in \mathbb{R} \times
    \mathbb{R}^3 : 0 \leqslant s \leqslant t \text{ and }  y \in B (x, 2 (t
    - s)) \right\}
  \end{equation}
  as in Theorem \ref{thm:linear}.
  
  \begin{proof}
    In this case the difference of the solutions $u^{(R)}$ and $u^{(L)}$ with
    $R \geqslant L$ will not solve the same equation as in the linear case.
    Instead we make the observation that the difference $d \assign u^{(R)} -
    u^{(L)}$ in this case solves the equation
    \begin{align*}
      \partial^2_t d - H_L d = & \underset{= 0 \text{ in } B
      (L)}{\underbrace{(\chi_{B (R)} Z_{\leqslant} - \chi_{B (L)}
      Z_{\leqslant})}} u^{(R)} - d (| u^{(R)} |^2 + u^{(L)} u^{(R)} + |
      u^{(L)} |^2)\\
      (d, \partial_t d) |_{t = 0} = & (0, 0) .
    \end{align*}
    For future reference we also give the equation solved by $b \assign e^{-
    W_{>}} d$ and its ``weak'' formulation; in analogy to the previous
    sections we also write $u^{(i)} = e^{W_{>}} v^{(i)}$ for $i = R, L$
    \begin{align*}
      \partial^2_t b - e^{- W_{>}} H_L e^{W_{>}} b  = & \underset{= 0
      \text{ in } B (L)}{\underbrace{(\chi_{B (R)} Z_{\leqslant} - \chi_{B (L)}
      Z_{\leqslant})}} v^{(R)} - e^{2 W_{>}} b ( | v^{(R)} |^2 + v^{(L)}
      v^{(R)} + | v^{(L)} |^2)\\
      (d, \partial_t d) |_{t = 0} = & (0, 0) ;\\
        & \\
      (\phi, e^{2 W_{>}} \partial^2_t b - e^{W_{>}} H_L e^{W_{>}} b)  = &
      (\phi, (\chi_{B (R)} Z_{\leqslant} - \chi_{B (L)} Z_{\leqslant}) e^{2
      W_{>}} v^{(R)} - e^{4 W_{>}} b ( | v^{(R)} |^2 + v^{(L)} v^{(R)} + |
      v^{(L)} |^2))\\
      \text{for all } \phi & \in  \mathcal{H}^1 .
    \end{align*}
    We thus again compute integrating the gradient term as in the proof of
    Theorem \ref{thm:linear}
    \begin{align*}
      \frac{d}{d s} e (s)  = & \int_{\mathbb{R}^d} \frac{d}{d s} \varphi_{(t,
      x)} (s) \frac{1}{2} e^{2 W_{>}} (| \partial_t b (s) |^2 + | \nabla b (s)
      |^2 + (\tmmathbf{C} (\Xi, L) - Z_{>}) | b (s) |^2) +\\ &+ \int_{\mathbb{R}^d}
      \varphi_{(t, x)} (s) e^{2 W_{>}} \partial_t b (s) (\partial^2_t b (s) +
      (\tmmathbf{C} (\Xi, L) - Z_{>}) b (s))+ \int_{\mathbb{R}^d}
      \varphi_{(t, x)} (s) e^{2 W_{>}} \partial_t \nabla b (s) \nabla b (s)\\
       = & \int_{\mathbb{R}^d} \frac{d}{d s} \varphi_{(t, x)} (s)
      \frac{1}{2} e^{2 W_{>}} (| \partial_t b (s) |^2 + | \nabla b (s) |^2 +
      (\tmmathbf{C} (\Xi, L) - Z_{>}) | b (s) |^2) +\\ &+ \int_{\mathbb{R}^d}
      \varphi_{(t, x)} (s) e^{2 W_{>}} \partial_t b (s) (\partial^2_t b (s) -
      e^{- W_{>}} H_L e^{W_{>}} b (s) + e^{2 W_{>}} b (s) (| v^{(R)} |^2 +
      v^{(L)} v^{(R)} + | v^{(L)} |^2)) +\\ &+ \int_{\mathbb{R}^d} \varphi_{(t, x)}
      (s) e^{2 W_{>}} \partial_t b (s) b (s)  (\tmmathbf{C} (\Xi, L) + \chi_{B
      (L)} Z_{\leqslant}) +\\ &+ \int_{\mathbb{R}^d} \varphi_{(t, x)} (s) e^{4
      W_{>}} \partial_t b (s) b (s) (| v^{(R)} |^2 + v^{(R)} v^{(L)} + |
      v^{(L)} |^2) .
    \end{align*}
    Now we proceed similarly to the proof of Theorem \ref{thm:linear}. We
    need, however, one additional ingredient, namely bounds on $v^{(R)}$ and
    $v^{(L)}$ which allow us control the final term. Since they are all
    bounded analogoulsy we show how to bound one term and the others are
    analogous. We start by making a simple observation about our bump function
    $\varphi_{(t, x)}$, which is that
    \begin{align}
      \varphi_{(t, x)} (s) \leqslant \chi_{\tmop{supp} \varphi_{(t, x)} (s)} &
      = \chi_{B (x, 2 (t - s))} + \chi_{\tmop{supp} \varphi'_{(t, x)} (s) } 
      \label{ineq:bump}\\
      & = \chi_{B (x, 2 (t - s))} + \chi_{| \cdummy - x | - 2 (t - s) \in [0,
      1]} \\
      & = \chi_{B (x, 2 (t - s))} - \varphi'_{(t, x)} (s) \\
      & \text{and} \nonumber\\
      \chi_{B (x, 2 (t - s))} & \leqslant \varphi_{(t, x)} (s).
    \end{align}
    For $\varepsilon > 0$ we bound by Young, H{\"o}lder and the
    $\mathcal{H}^1 \hookrightarrow L^6$ Sobolev embedding
    \begin{align*}
      & \left| \int_{\mathbb{R}^3} \varphi_{(t, x)} (s) e^{4 W_{>}}
      \partial_t b (s) b (s) | v^{(R)} (s) |^2 \right|\leqslant\\
      \leqslant & C (\varepsilon) \int_{\mathbb{R}^3} \varphi_{(t, x)} (s)
      e^{2 W_{>}} | \partial_t b (s) |^2 + \varepsilon \int_{\mathbb{R}^3}
      \varphi_{(t, x)} (s) e^{6 W_{>}} | b (s) |^2 | v^{(R)} (s) |^4\\
      \leqslant & C (\varepsilon) \int \varphi_{(t, x)} (s) e^{2 W_{>}} |
      \partial_t b (s) |^2 + \varepsilon \left( \int_{\mathbb{R}^d}
      \varphi_{(t, x)} (s) | b (s) |^6 \right)^{\frac{1}{3}} \underset{0
      \leqslant \tau \leqslant t}{\sup} \left( \int_{\mathbb{R}^d}
      \varphi_{(t, x)} (\tau) | v^{(R)} (\tau) |^6 \right)^{\frac{2}{3}} \|
      e^{6 W_{>}} \|_{L^{\infty}}\\
      \overset{\eqref{ineq:bump}}{\leqslant} & C (\varepsilon) \int
      \varphi_{(t, x)} (s) e^{2 W_{>}} | \partial_t b (s) |^2 + \varepsilon
      \Bigg( \left( \int_{B (x, 2 (t - s))} | b (s) |^6 \right)^{\frac{1}{3}}
      +\\ &+ \left( \int_{\textrm{\tmop{supp}} \varphi'} | b (s) |^6
      \right)^{\frac{1}{3}} \Bigg) \underset{0 \leqslant \tau \leqslant
      t}{\sup} \left( \int_{\mathbb{R}^d} | v^{(R)} (\tau) |^6
      \right)^{\frac{2}{3}} \| e^{6 W_{>}} \|_{L^{\infty}}\\
      \leqslant & C (\varepsilon) \int \varphi_{(t, x)} (s) e^{2 W_{>}} |
      \partial_t b (s) |^2 + C \varepsilon \Big( \int_{B (x, 2 (t - s))} |
      \nabla b (s) |^2 + | b (s) |^2 +\\&+ \int_{\textrm{\tmop{supp}} \varphi'} |
      \nabla b (s) |^2 + | b (s) |^2 \Big) \underset{0 \leqslant \tau
      \leqslant t}{\sup} \left( \int_{\mathbb{R}^d} | \nabla v^{(R)} (\tau)
      |^2 + | v^{(R)} (\tau) |^2 \right)^2 \| e^{6 W_{>}} \|_{L^{\infty}}\\
      \leqslant & C (\varepsilon) \int \varphi_{(t, x)} (s) e^{2 W_{>}} |
      \partial_t b (s) |^2 + \varepsilon C (\Xi) \Big( \int_{\mathbb{R}^d}
      \varphi_{(t, x)} (s) e^{2 W_{>}} | \nabla b (s) |^2 -\\&-
      \int_{\mathbb{R}^d} \varphi'_{(t, x)} (s) e^{2 W_{>}} | \nabla b (s) |^2
      \Big) e^{\tilde{C} (\Xi, R) t} E_{\gg} (u^R_0, u^R_1)
    \end{align*}
    hence by choosing $\varepsilon$ small enough depending on the norms of the
    noise terms, the parameter $L$ and the $L_{[0, t]}^{\infty} \mathcal{H}^1 -$norm of
    $v^{(R)}, v^{(L)} $(which are bounded by the initial data by Theorem
    \ref{thm:trunc}) we get, after bounding the other terms in the same way
    \begin{align*}
      \frac{d}{d s} e (s) \leqslant & \int_{\mathbb{R}^d} \varphi'_{(t, x)}
      (s) e^{2 W_{>}} \Big( | \partial_t b (s) |^2 + | \nabla b (s) |^2 (1 -
      \varepsilon C (\Xi, L, R, u^R_0, u^R_1, u^L_0, u^L_1, \varepsilon)) +\\&+
      \left( \tmmathbf{C} (\Xi, L, R) - \varepsilon C (\Xi, L, R, u^R_0,
      u^R_1, u^L_0, u^L_1, \varepsilon) \right) | b (s) |^2  \Big) + \bar{C} (\Xi, L, R, u^R_0, u^R_1, u^L_0, u^L_1,
      \varepsilon) e (s) \\
       \leqslant & c (\Xi, L, R, u^R_0, u^R_1, u^L_0, u^L_1) e (s)
    \end{align*}
    using the negativity of the first term as in the proof of Theorem
    \ref{thm:linear} and choosing a suitably small $\varepsilon > 0$.
    
    Proving the bound \eqref{ineq:ei} is analogous.
    
    This finishes the proof.
  \end{proof}
\end{theorem}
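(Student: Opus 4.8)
The plan is to follow Tartar's method exactly as in the proof of Theorem \ref{thm:linear}, treating the cubic nonlinearity as a perturbation that is controlled by the conserved energy. First I would record the equation satisfied by the difference $d \assign u^{(R)} - u^{(L)}$: unlike the linear case it no longer solves a clean truncated equation but instead
\[
  \partial_t^2 d - H_L d = (\chi_{B(R)} Z_{\leqslant} - \chi_{B(L)} Z_{\leqslant}) u^{(R)} - d\big(|u^{(R)}|^2 + u^{(L)} u^{(R)} + |u^{(L)}|^2\big),
\]
with vanishing initial data, the first term being supported outside $B(L)$. Passing to $b \assign e^{-W_{>}} d$ and its weak formulation, I would then differentiate the local energy $e(s)$ in $s$, integrating by parts in the gradient term exactly as in \eqref{eqn:3/4}. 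All the linear and quadratic contributions -- including the delicate pairing against $Z_{>}$ -- are handled verbatim as in Theorem \ref{thm:linear}, in particular using the restriction/extension bound \eqref{eqn:1/8} to absorb the $Z_{>}$ term into a fraction of the gradient term plus an $L^2$ term.

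The genuinely new contribution is the cubic term $\int \varphi_{(t,x)}(s)\, e^{4 W_{>}}\, \partial_t b(s)\, b(s)\, (|v^{(R)}|^2 + v^{(R)} v^{(L)} + |v^{(L)}|^2)$. Here I would split each summand by Young's inequality into a harmless $C(\varepsilon) \int \varphi\, e^{2W_{>}} |\partial_t b|^2$ piece -- which feeds directly into the Gronwall term -- and a small piece $\varepsilon \int \varphi\, e^{6 W_{>}} |b|^2 |v^{(R)}|^4$. The latter I would estimate by H\"older as $\varepsilon \big(\int \varphi |b|^6\big)^{1/3}\big(\sup_{0 \leqslant \tau \leqslant t}\int \varphi |v^{(R)}(\tau)|^6\big)^{2/3}$, controlling the $v$-factor uniformly in time by the energy bound $e^{\tilde{C}(\Xi,R) t} E_{\gg}(u_0^R, u_1^R)$ coming from Theorem \ref{thm:trunc} and the embedding $\mathcal{D}(\sqrt{-H_{\gg}}) \hookrightarrow L^6$.

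The key geometric device is the elementary bump-function identity \eqref{ineq:bump}, which lets me replace $\varphi_{(t,x)}(s)$ on its support by $\chi_{B(x,2(t-s))} - \varphi'_{(t,x)}(s)$ and, after the Sobolev embedding $\mathcal{H}^1 \hookrightarrow L^6$, convert $\big(\int_{B}|b|^6\big)^{1/3}$ into gradient and $L^2$ integrals of $b$ over the ball and over the collar $\tmop{supp}\,\varphi'$. Since the collar contributes with the favourable sign (through $-\varphi'$, exactly as the negative $3/4$-gradient term in \eqref{eqn:3/4}), choosing $\varepsilon$ small enough -- depending on $\Xi$, on $L$, and on the $L^{\infty}_{[0,t]}\mathcal{H}^1$ norms of $v^{(R)}, v^{(L)}$ (finite by Theorem \ref{thm:trunc}) -- makes the total gradient coefficient and the total $|b|^2$ coefficient nonpositive, leaving only a term $c(\Xi,L,R,\dots)\, e(s)$. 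Applying Gronwall then yields the stated bound on $e(s)$; the bound \eqref{ineq:ei} on $e^{(i)}(s)$ is proved identically, and is in fact slightly simpler since the inhomogeneous forcing term is absent.

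I would expect the main obstacle to be precisely the absorption of this cubic term: one must simultaneously produce a Gronwall term, route the remaining gradient contribution into the negative collar term coming from $\varphi'$, and ensure the $v$-factors are controlled \emph{uniformly in time} rather than merely locally, which is exactly where the global-in-time energy bound from Theorem \ref{thm:trunc} is indispensable. Everything else is a faithful repetition of the linear argument.
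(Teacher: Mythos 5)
Your proposal is correct and follows essentially the same route as the paper's proof: the same difference equation for $d=u^{(R)}-u^{(L)}$, the same Young/H\"older splitting of the cubic term, the same use of the bump-function identity \eqref{ineq:bump} together with the Sobolev embedding $\mathcal{H}^1\hookrightarrow L^6$ to route the resulting gradient contribution into the negative collar term, and the same uniform-in-time control of the $v^{(i)}$ factors via the energy bound from Theorem \ref{thm:trunc} before closing with Gronwall. No substantive differences or gaps.
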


This leads us to the final result, which tells us that the PDE
\eqref{eqn:cubicformal} whose weak formulation is \eqref{eqn:weakcubic} is
globally well-posed in space and time.

\begin{theorem}[Global well-posedness of the cubic multiplicative stochastic
wave equation]
  Let $T > 0$ and initial data $(u_0, u_1) \in e^{W_{>}} \mathcal{H}^1 \times
  L^2$. Then there exists a unique solution
  \[ u \in C ([0, T] ; e^{W_{>}} \mathcal{H}_{\tmop{loc}}^1) \cap C^1 ([0, T]
     ; L_{\tmop{loc}}^2) \]
  to the PDE \eqref{eqn:weakcubic} with continous dependence of the localised
  norms on the data inside the backward light cones.
  
  \begin{proof}
    By Theorem \ref{thm:trunc} we have local space-time existence for
    solutions to \eqref{eqn:weakcubic} and then Theorem \ref{thm:finitecubic}
    gives us local uniqueness and local continuous dependence on the data in
    the backward light cone.
  \end{proof}
\end{theorem}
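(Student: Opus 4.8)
The plan is to build the global‑in‑space solution by gluing together the truncated solutions $u^{(R)}$ produced in Theorem~\ref{thm:trunc} and to control the gluing by the finite speed of propagation from Theorem~\ref{thm:finitecubic}. First I would, for every $R > 0$, apply Theorem~\ref{thm:trunc} to the prescribed data $(u_0, u_1) \in e^{W_{>}}\mathcal{H}^1 \times L^2 = \mathcal{D}(\sqrt{-H_{\gg}}) \times L^2$ and obtain the unique global solution $u^{(R)} \in C([0,T]; e^{W_{>}}\mathcal{H}^1) \cap C^1([0,T]; L^2)$ of the truncated equation~\eqref{eqn:NLWtrunc}. Since the data is the \emph{same} for every $R$, Theorem~\ref{thm:finitecubic} applies to the pair $u^{(R)}, u^{(L)}$ (for $R \geqslant L \gg 0$) with $b = v^{(R)} - v^{(L)}$ having vanishing initial data, so the local energy $e(s)$ of $b$ satisfies $e(s) \leqslant c\, e(0) = 0$ on every backward light cone $\mathfrak{C}_{(t,x)} \subset [0, \tfrac{L}{2}] \times B(L)$. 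Using the uniform positivity built into $e$ this forces $u^{(R)} = u^{(L)}$ on the region $G_L := \{(t,x) : t \geqslant 0, \ |x| + 2t \leqslant L\}$, for all $R \geqslant L$.

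Second, I would define $u$ by setting $u(t,x) := u^{(L)}(t,x)$ for any $L$ with $(t,x) \in G_L$; the first step makes this independent of the choice of $L$, and since $\bigcup_{L} G_L = \mathbb{R}_{+} \times \mathbb{R}^3$ this defines $u$ on all of $[0,T] \times \mathbb{R}^3$. On each $G_L$ the function $u$ agrees with a single $u^{(L)}$, so it inherits the regularity $u \in C([0,T]; e^{W_{>}}\mathcal{H}^1_{\tmop{loc}}) \cap C^1([0,T]; L^2_{\tmop{loc}})$, and the localised norms are controlled by the energy bound~\eqref{ineq:ei}.

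Third, to verify that $u$ solves the weak formulation~\eqref{eqn:weakcubic}, I would fix $t \in [0,T)$ and a test function $\phi$ supported in some ball $B(r)$, and choose $L > r + 2t$. Then a full space‑time neighbourhood of $\{t\} \times \operatorname{supp}\phi$ lies in the interior of $G_L$, so $u \equiv u^{(L)}$ there, together with the time derivatives entering~\eqref{eqn:weakcubic}. As $\operatorname{supp}\phi \subset B(r) \subset B(L)$ we have $\chi_{B(0,L)} Z_{\leqslant} = Z_{\leqslant}$ on $\operatorname{supp}\phi$, so the truncated weak formulation satisfied by $u^{(L)}$ coincides, when tested against $\phi$, with~\eqref{eqn:weakcubic}; the endpoint $t = T$ follows by continuity. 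For uniqueness and continuous dependence I would argue purely locally: since $H$ coincides with $H_R$ on $B(R)$, the difference of two full solutions with identical (respectively nearby) data satisfies, inside any backward light cone contained in $G_R$, exactly the local energy estimate of Theorem~\ref{thm:finitecubic} with vanishing (respectively small) data, forcing the difference to vanish (respectively to be controlled by the data difference) there.

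The step I expect to be the main obstacle is the rigorous verification in the third paragraph that the glued object is a genuine weak solution of the \emph{full} equation rather than a mere pointwise limit of truncated ones. The delicate point is that the distributional term $e^{W_{>}}\partial_t^2 v$ in~\eqref{eqn:weakcubic} requires $u$ to agree with $u^{(L)}$ in a genuine two‑sided time‑neighbourhood of the test slice, and that the rough pairing $(e^{2W_{>}} Z_{>} v, \phi)_{\mathcal{H}^{-1},\mathcal{H}^1}$ be unaffected by dropping the localisation. Both rest on the fact, established above, that for $t$ strictly below $\tfrac{L}{2}$ the set $G_L$ contains a full space‑time neighbourhood of $\{t\} \times \operatorname{supp}\phi$, so that the purely local character of the weak formulation lets the truncation disappear.
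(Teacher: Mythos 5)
Your proposal is correct and follows essentially the same route as the paper: global solutions of the truncated equations from Theorem \ref{thm:trunc}, consistency on the regions $G_L$ via the finite speed of propagation of Theorem \ref{thm:finitecubic}, and gluing to obtain existence, with uniqueness and continuous dependence argued locally in the backward light cones. In fact you supply considerably more detail (the verification that the glued object satisfies the weak formulation, and the removal of the truncation on the support of the test function) than the paper's own two-line proof.
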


\appendix\section{Appendix: Some results on Besov spaces and weights}

\begin{definition}[Weighted Besov spaces, {\cite{GHglobal}}]
  \label{def:besov}For $\nu \in \mathbb{R}$ we consider the following class of
  weights
  \[ \langle x \rangle^{\nu} = (1 + | x |^2)^{\frac{\nu}{2}} \quad x \in
     \mathbb{R}^d \]
  and define the weighted $L^p$ space w.r.t. this weight as
  \[ L_{\langle \cdummy \rangle^{\nu}}^p \assign \{ f \in \mathcal{S}'
     (\mathbb{R}^d) : f \cdummy \langle \cdummy \rangle^{\nu} \in L^p \}, p
     \in [1, \infty], \]
  whose norm is defined as
  \[ \| f \|_{L_{\langle \cdummy \rangle^{\nu}}^p} \assign \| f \langle
     \cdummy \rangle^{\nu} \|_{L^p} . \]
  Moreover, for a Littlewood-Paley decomposition $(\Delta_i)_{i \geqslant -
  1},$ one defines weighted Besov spaces as
  \[ B_{p, q, \nu}^s \assign \{ f \in \mathcal{S}' (\mathbb{R}^d) : \| f
     \|_{B_{p, q, \nu}^s} < \infty  \}, \text{with the norm } \| f \|_{B_{p,
     q, \nu}^s} \assign \left\| \| \Delta_i f \|_{L_{\langle \cdummy
     \rangle^{\nu}}^p} \right\|_{\ell_i^q} . \]
  In particular, for $\nu = 0$ this agrees with the usual unweighted Besov
  space, moreover we write
  \[ \mathcal{C}_{\langle \cdummy \rangle^{\nu}}^s \assign B_{\infty, \infty,
     \nu}^s \]
  and refer to it as a weighted Besov-H{\"o}lder space.
  
  We note that one can analogously define these spaces on the torus
  $\mathbb{T}^d$, however one usually does not need weights in that setting
  due to the compactness of $\mathbb{T}^d .$
\end{definition}

\begin{lemma}[Besov regularity of indicator functions, Theorem 2 in
{\cite{charreg}}]
  \label{lem:charreg}Let $A \subset \mathbb{R}^d $ be a bounded set with
  finite perimeter, then we have
  \[ \chi_A \in B_{p, \infty}^{\frac{1}{p}}  \text{ for any } p \in [1, \infty)
     . \]
\end{lemma}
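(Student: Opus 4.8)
The plan is to reduce the claim to a single consequence of the finite-perimeter hypothesis, namely a linear bound on the $L^1$ modulus of continuity of $\chi_A$, and then to feed this bound into the Littlewood-Paley characterisation of $B_{p, \infty}^{1/p}$ coming from Definition \ref{def:besov}.

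First I would record what finite perimeter buys us: the statement that $A \subset \mathbb{R}^d$ has finite perimeter is precisely that $\chi_A$ is of bounded variation, i.e. its distributional gradient $D \chi_A$ is a finite $\mathbb{R}^d$-valued Radon measure, with total mass $\mathrm{Per}(A) \assign |D \chi_A|(\mathbb{R}^d) < \infty$. From this I would extract the difference bound
\[ \| \chi_A(\cdot - h) - \chi_A \|_{L^1(\mathbb{R}^d)} = |(A + h) \triangle A| \leq |h| \, \mathrm{Per}(A), \qquad h \in \mathbb{R}^d, \]
where $\triangle$ is the symmetric difference; the equality uses that $\chi_A$ is $\{0, 1\}$-valued. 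This is proved by mollification: for the smooth approximations $\chi_A \ast \rho_{\varepsilon}$ the fundamental theorem of calculus gives $\| (\chi_A \ast \rho_{\varepsilon})(\cdot - h) - \chi_A \ast \rho_{\varepsilon} \|_{L^1} \leq |h| \, \| \nabla (\chi_A \ast \rho_{\varepsilon}) \|_{L^1} \leq |h| \, \mathrm{Per}(A)$, using $\| \nabla (\chi_A \ast \rho_{\varepsilon}) \|_{L^1} \leq |D \chi_A|(\mathbb{R}^d)$, and one lets $\varepsilon \to 0$.

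Next I would convert this into decay of the Littlewood-Paley blocks. For $j \geq 0$ the block $\Delta_j = K_j \ast (\cdot)$ has a kernel of mean zero (its symbol vanishes near the origin) with $\int |K_j(y)| \, |y| \, d y \lesssim 2^{- j}$ by scaling, so writing $\Delta_j \chi_A(x) = \int K_j(y) [\chi_A(x - y) - \chi_A(x)] \, d y$ and applying the difference bound inside the integral yields $\| \Delta_j \chi_A \|_{L^1} \lesssim 2^{- j} \, \mathrm{Per}(A)$. Pairing this with the trivial bound $\| \Delta_j \chi_A \|_{L^{\infty}} \leq \| K_j \|_{L^1} \| \chi_A \|_{L^{\infty}} \lesssim 1$ and interpolating gives
\[ \| \Delta_j \chi_A \|_{L^p} \leq \| \Delta_j \chi_A \|_{L^1}^{1/p} \, \| \Delta_j \chi_A \|_{L^{\infty}}^{1 - 1/p} \lesssim 2^{- j/p} \, \mathrm{Per}(A)^{1/p}, \qquad j \geq 0. \]
Boundedness of $A$ handles the low-frequency term via $\| \Delta_{- 1} \chi_A \|_{L^p} \lesssim \| \chi_A \|_{L^p} = |A|^{1/p} < \infty$. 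Taking the supremum over $j \geq - 1$ of $2^{j/p} \| \Delta_j \chi_A \|_{L^p}$ is exactly the $B_{p, \infty}^{1/p}$ norm, which is thus finite, and this treats every $p \in [1, \infty)$ uniformly, including the endpoint $p = 1$ where $s = 1/p = 1$.

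The main obstacle, and the only place the hypothesis genuinely enters, is the difference bound of the second paragraph; the remaining steps are standard harmonic analysis. The one point requiring care is the endpoint $p = 1$: there the first-difference characterisation $\| f \|_{B_{p, \infty}^{1/p}} \sim \| f \|_{L^p} + \sup_{h \neq 0} |h|^{- 1/p} \| f(\cdot - h) - f \|_{L^p}$, which for $p > 1$ (so $1/p \in (0, 1)$) would let one conclude immediately from the exact identity $\| \chi_A(\cdot - h) - \chi_A \|_{L^p} = |(A + h) \triangle A|^{1/p} \leq |h|^{1/p} \mathrm{Per}(A)^{1/p}$, is unavailable, since at $s = 1$ first differences only detect $BV$, which is strictly smaller than $B_{1, \infty}^1$. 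Hence for the full range the Littlewood-Paley route through the mean-zero-kernel estimate is essential rather than merely a convenience.
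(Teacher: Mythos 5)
Your proof is correct, but note that the paper does not prove this lemma at all: it is quoted verbatim as Theorem 2 of \cite{charreg} (Sickel), so you have supplied a self-contained argument where the paper simply defers to the literature. Your route is the classical one and every step checks out: finite perimeter gives $\chi_A \in BV$, mollification yields the translation estimate $\| \chi_A (\cdummy - h) - \chi_A \|_{L^1} \leqslant | h | \, \tmop{Per} (A)$, the vanishing moment of the Littlewood--Paley kernels for $j \geqslant 0$ converts this into $\| \Delta_j \chi_A \|_{L^1} \lesssim 2^{- j} \tmop{Per} (A)$ via Minkowski's integral inequality, and log-convexity of the $L^p$ norms together with the uniform $L^{\infty}$ bound gives $2^{j / p} \| \Delta_j \chi_A \|_{L^p} \lesssim \tmop{Per} (A)^{1 / p}$, with the block $\Delta_{- 1}$ handled by $| A | < \infty$. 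Your endpoint discussion is also accurate: at $p = 1$ the first-difference characterisation of $B^s_{p, \infty}$ is unavailable at $s = 1$ (first differences there detect $BV$, which embeds strictly into $B^1_{1, \infty}$), so the Littlewood--Paley route is genuinely needed to cover $p = 1$ uniformly with $p > 1$ --- although, since you already have $\chi_A \in BV$, at that single endpoint you could alternatively have invoked the classical embedding $BV \hookrightarrow B^1_{1, \infty}$, which is exactly what your block estimate proves. The only difference in scope worth recording is that Sickel's cited theorem is sharper than what you prove (it locates the exact Besov smoothness of characteristic functions, with the finite-perimeter hypothesis essentially characterising the regularity $1 / p$), whereas your argument establishes precisely the one inclusion the paper uses, which is all that is needed for the applications in the proof of Theorem \ref{thm:linear}.
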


\begin{lemma}[Fractional Leibnitz for Besov spaces, Proposition A.7 in
{\cite{mourrat3d}}]
  \label{lem:leib}
  
  For $s > 0$ and $1 \leqslant p, q, p_1, p_2, p_3, p_4 \leqslant \infty$ s.t.
  \[ \frac{1}{p} = \frac{1}{p_1} + \frac{1}{p_2} = \frac{1}{p_3} +
     \frac{1}{p_4}, \]
  we have the bound
  \[ \| f \cdummy g \|_{B_{p, q}^s} \lesssim \| f \|_{B_{p_1 q}^s} \| g
     \|_{L^{p_2}} + \| f \|_{L^{p_3}} \| g \|_{B_{p_4, q}^s} \]
\end{lemma}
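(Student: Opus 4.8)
The plan is to prove this via Bony's paraproduct decomposition. Writing $(\Delta_j)_{j \geqslant -1}$ for the Littlewood--Paley blocks from Definition \ref{def:besov} and $S_{j} \assign \sum_{i < j} \Delta_i$ for the low-frequency projections, I define
\[ \Pi_{<}(f, g) \assign \sum_j S_{j-1} f \, \Delta_j g, \qquad \Pi_{\sim}(f, g) \assign \sum_{|i - j| \leqslant 1} \Delta_i f \, \Delta_j g, \]
so that one has the exact decomposition $f g = \Pi_{<}(f, g) + \Pi_{<}(g, f) + \Pi_{\sim}(f, g)$. The two terms on the right-hand side of the claimed bound will arise from the two low-high paraproducts, and the resonant term $\Pi_{\sim}$ will be absorbed into the first of them.

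First I would treat the paraproduct $\Pi_{<}(g, f) = \sum_j S_{j-1} g \, \Delta_j f$. The key structural fact is that each summand $S_{j-1} g \, \Delta_j f$ has Fourier support in a dyadic annulus $\{ |\xi| \sim 2^j \}$, so that $\Delta_k \Pi_{<}(g, f)$ receives contributions only from the finitely many $j$ with $|j - k| \leqslant N$, for a fixed $N$. H\"older's inequality with $\frac{1}{p} = \frac{1}{p_1} + \frac{1}{p_2}$ together with the uniform bound $\| S_{j-1} g \|_{L^{p_2}} \lesssim \| g \|_{L^{p_2}}$ then gives
\[ \| \Delta_k \Pi_{<}(g, f) \|_{L^p} \lesssim \| g \|_{L^{p_2}} \sum_{|j - k| \leqslant N} \| \Delta_j f \|_{L^{p_1}}, \]
and multiplying by $2^{k s}$ and taking the $\ell^q_k$ norm produces $\| \Pi_{<}(g, f) \|_{B^s_{p, q}} \lesssim \| g \|_{L^{p_2}} \| f \|_{B^s_{p_1, q}}$. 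The symmetric paraproduct $\Pi_{<}(f, g)$ is handled in exactly the same way using the exponents $(p_3, p_4)$, yielding the second term $\| f \|_{L^{p_3}} \| g \|_{B^s_{p_4, q}}$.

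The resonant term is where the hypothesis $s > 0$ is essential. Here each summand $\Delta_i f \, \Delta_j g$ with $|i - j| \leqslant 1$ has Fourier support merely in a ball $\{ |\xi| \lesssim 2^j \}$ rather than an annulus, so $\Delta_k$ now selects all $j \geqslant k - N$. Estimating by H\"older as before and inserting the factor $2^{(k - j)s}$, I obtain
\[ 2^{k s} \| \Delta_k \Pi_{\sim}(f, g) \|_{L^p} \lesssim \| g \|_{L^{p_2}} \sum_{j \geqslant k - N} 2^{(k - j) s} \bigl( 2^{j s} \| \Delta_j f \|_{L^{p_1}} \bigr), \]
which exhibits the left-hand side as a discrete convolution of the $\ell^q$-sequence $\bigl( 2^{j s} \| \Delta_j f \|_{L^{p_1}} \bigr)_j$, whose norm is $\| f \|_{B^s_{p_1, q}}$, against the kernel $m \mapsto 2^{m s} \chi_{m \leqslant N}$. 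Precisely because $s > 0$, this kernel is summable, so Young's convolution inequality for sequences bounds the $\ell^q_k$ norm by $\| f \|_{B^s_{p_1, q}} \| g \|_{L^{p_2}}$. The main obstacle is exactly this summability: for $s \leqslant 0$ the geometric sum diverges and the resonant interaction cannot be controlled, which is the structural reason the statement requires $s > 0$. Collecting the three estimates and applying the triangle inequality then yields the claimed product bound.
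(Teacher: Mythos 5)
Your proof is correct, and it is the standard argument: the paper does not prove this lemma at all but simply cites Proposition A.7 of Mourrat--Weber, whose proof is precisely the Bony decomposition you carry out, with the two paraproducts estimated via annulus-supported blocks and H\"older (valid for any $s \in \mathbb{R}$) and the resonant term requiring $s > 0$ for the geometric sum $\sum_{m \leqslant N} 2^{m s}$ to converge, exactly as you identify. The only cosmetic caveat is that for $j \in \{-1, 0\}$ the summands $S_{j-1} g \, \Delta_j f$ are supported in a ball rather than an annulus, but these finitely many low-frequency terms only contribute to $\Delta_k$ for $k \leqslant N$ and are absorbed harmlessly into the same estimate.
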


\begin{lemma}[Besov embedding and interpolation]
  \label{lem:embedding}Let $1 \leqslant p_1 \leqslant p_2 \leqslant \infty$,
  $1 \leqslant q_1 \leqslant q_2 \leqslant \infty$ and $\alpha \in
  \mathbb{R}$. Then we have the continuous embeddings
  \[ B_{p_1, q_1}^{\alpha} \hookrightarrow B_{p_2, q_2}^{\alpha - d \left(
     \frac{1}{p_1} - \frac{1}{p_2} \right)} \]
  as well as
  \begin{eqnarray*}
    B_{p, 1}^{d \left( \frac{1}{p} - \frac{1}{q} \right)} \hookrightarrow L^q
    &  \text{for } 1 \leqslant p \leqslant q \leqslant \infty\\
    & \text{and}\\
    B_{p, 2}^0 \hookrightarrow L^p & \text{for } 2 \leqslant p < \infty .
  \end{eqnarray*}
  Moreover, for $\alpha_1 < \alpha_2$ and $\theta \in (0, 1)$ we have for any
  $p, r \in [1, \infty]$ the bounds
  \begin{eqnarray*}
    \| u \|_{B^{\theta s_1 + (1 - \theta) s_2}_{p, r}} \lesssim & \| u
    \|^{\theta}_{B^{s_1}_{p, r}} \| u \|^{1 - \theta}_{B^{s_2}_{p, r}}\\
    \text{and} & \\
    \| u \|_{B^{\theta s_1 + (1 - \theta) s_2}_{p, 1}} \lesssim & \| u
    \|^{\theta}_{B^{s_1}_{p, \infty}} \| u \|^{1 - \theta}_{B^{s_2}_{p,
    \infty}} .
  \end{eqnarray*}
  \begin{proof}
    See Propositions 2.18, 2.22 and 2.39 as well as Theorem 2.40 in
    {\cite{BCD}}.
  \end{proof}
\end{lemma}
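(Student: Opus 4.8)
The plan is to derive each of the listed facts directly from the Littlewood--Paley characterisation of the Besov norm, $\|f\|_{B_{p,q}^s} = \big\| 2^{js}\|\Delta_j f\|_{L^p} \big\|_{\ell_j^q}$, using only Bernstein's inequalities, the Littlewood--Paley square function theorem, and elementary sequence inequalities. Since all four statements are classical, one may alternatively simply quote the corresponding results from {\cite{BCD}} as indicated; the sketch below is how I would prove them from scratch.

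First, for the embedding $B_{p_1,q_1}^{\alpha} \hookrightarrow B_{p_2,q_2}^{\alpha - d(1/p_1 - 1/p_2)}$ I would invoke Bernstein's inequality, which gives $\|\Delta_j f\|_{L^{p_2}} \lesssim 2^{jd(1/p_1 - 1/p_2)} \|\Delta_j f\|_{L^{p_1}}$ for $p_1 \le p_2$ because each $\Delta_j f$ is frequency-localised. Multiplying by $2^{j(\alpha - d(1/p_1 - 1/p_2))}$ turns the left-hand weight into $2^{j\alpha}\|\Delta_j f\|_{L^{p_1}}$, and taking $\ell^{q_2}$ norms together with the elementary nesting $\|x\|_{\ell^{q_2}} \le \|x\|_{\ell^{q_1}}$ (valid for $q_1 \le q_2$) yields the claim. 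The embedding $B_{p,1}^{d(1/p - 1/q)} \hookrightarrow L^q$ for $p \le q$ is then immediate: writing $f = \sum_j \Delta_j f$, the triangle inequality in $L^q$ and the same Bernstein bound give $\|f\|_{L^q} \le \sum_j \|\Delta_j f\|_{L^q} \lesssim \sum_j 2^{jd(1/p - 1/q)}\|\Delta_j f\|_{L^p} = \|f\|_{B_{p,1}^{d(1/p - 1/q)}}$, where the summability index $q=1$ is exactly what makes the series converge. For the remaining embedding $B_{p,2}^0 \hookrightarrow L^p$ with $2 \le p < \infty$ I would use the Littlewood--Paley theorem $\|f\|_{L^p} \lesssim \big\| (\sum_j |\Delta_j f|^2)^{1/2} \big\|_{L^p}$ followed by Minkowski's integral inequality, which is available precisely because $p \ge 2$ and which bounds the square function by $(\sum_j \|\Delta_j f\|_{L^p}^2)^{1/2} = \|f\|_{B_{p,2}^0}$.

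For the first interpolation bound I would write, with $s = \theta s_1 + (1-\theta)s_2$, the pointwise-in-$j$ factorisation $2^{js}\|\Delta_j u\|_{L^p} = (2^{js_1}\|\Delta_j u\|_{L^p})^{\theta} (2^{js_2}\|\Delta_j u\|_{L^p})^{1-\theta}$, and then apply H\"older's inequality on $\ell_j^r$ with conjugate exponents $1/\theta$ and $1/(1-\theta)$; this produces $\|u\|_{B_{p,r}^s} \le \|u\|_{B_{p,r}^{s_1}}^{\theta} \|u\|_{B_{p,r}^{s_2}}^{1-\theta}$ directly.

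The one step requiring genuine care, and which I expect to be the main obstacle, is the second interpolation inequality $\|u\|_{B_{p,1}^s} \lesssim \|u\|_{B_{p,\infty}^{s_1}}^{\theta} \|u\|_{B_{p,\infty}^{s_2}}^{1-\theta}$, where one upgrades the summability index from $\infty$ to $1$ at the price of losing the endpoint regularities. Here H\"older alone fails and a dyadic optimisation is needed. Setting $A \assign \|u\|_{B_{p,\infty}^{s_1}}$ and $B \assign \|u\|_{B_{p,\infty}^{s_2}}$ and assuming $s_1 < s_2$, one reads off the two bounds $2^{js}\|\Delta_j u\|_{L^p} \le A\, 2^{j(1-\theta)(s_2 - s_1)}$ and $2^{js}\|\Delta_j u\|_{L^p} \le B\, 2^{-j\theta(s_2 - s_1)}$. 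Summing over $j$ and splitting at the cross-over index $j_0$ determined by $2^{j_0(s_2 - s_1)} \sim B/A$, the low-frequency part (an increasing geometric series, dominated by its top term) and the high-frequency part (a decreasing one) are each controlled by $A^{\theta}B^{1-\theta}$, which is exactly the desired right-hand side. Assembling these four pieces completes the proof.
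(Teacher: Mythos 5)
Your proposal is correct, but it takes a different route from the paper in the trivial sense that the paper offers no argument at all: its ``proof'' is a pure citation of Propositions 2.18, 2.22 and 2.39 and Theorem 2.40 in \cite{BCD}. What you have written out is, in essence, the standard textbook derivation of exactly those cited results: Bernstein's inequality on each Littlewood--Paley block combined with the $\ell^{q_1}\hookrightarrow\ell^{q_2}$ nesting for the general embedding; the triangle inequality plus Bernstein for $B^{d(1/p-1/q)}_{p,1}\hookrightarrow L^q$ (where one should also note that the absolute convergence of $\sum_j \Delta_j f$ in $L^q$ identifies the limit with $f$, since the series already converges to $f$ in $\mathcal{S}'$); the square-function theorem plus Minkowski in $L^{p/2}$ for $B^0_{p,2}\hookrightarrow L^p$, where you correctly isolate $p\geqslant 2$ as the condition making Minkowski applicable --- though note this step imports the Littlewood--Paley theorem, i.e.\ Calder\'on--Zygmund machinery, so it is the least elementary ingredient; H\"older in $\ell^r$ with exponents $1/\theta$, $1/(1-\theta)$ for the first interpolation bound; and the dyadic crossover optimisation at $2^{j_0(s_2-s_1)}\sim B/A$ for the second. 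You correctly identify that last inequality as the only genuinely nontrivial point --- upgrading summability from $\ell^\infty$ to $\ell^1$ cannot follow from H\"older --- and your geometric-series splitting is exactly the argument behind Proposition 2.22 in \cite{BCD}; it is worth recording that the implicit constant obtained this way degenerates like $(\theta(1-\theta)(s_2-s_1))^{-1}$ as $\theta\rightarrow 0,1$, consistent with the cited statement, and that the case where the crossover index $j_0$ falls below $j=-1$ (so only the decaying sum appears) is harmless. In short: the paper buys brevity by citation, your version buys self-containedness; both are sound, and your sketch would compile into a complete proof with only the minor bookkeeping points above made explicit.
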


\begin{proposition}[{\cite{rychkov}},Extension and restriction operators on
Besov spaces]
  \label{prop:ext}Let $\Omega \subset \mathbb{R}^d$ be an open set with
  Lipschitz boundary, then the Besov space $B^s_{p, q} (\Omega)$ is defined as
  \begin{equation}
    B^s_{p, q} (\Omega) \assign \{ f \in \mathcal{D}' (\Omega) : \| f
    \|_{B^s_{p, q} (\Omega)} \assign \inf \{ \| g \|_{B^s_{p, q}
    (\mathbb{R}^d) } g \in \mathcal{D}' (\mathbb{R}^d) : g|_{\Omega} = f \} \}
    .
  \end{equation}
  Then there exists a bounded extension operator $E_{\Omega}$ s.t. for $f \in
  B^s_{p, q} (\Omega)$
  \[ \| E_{\Omega} f \|_{B^s_{p, q} (\mathbb{R}^d)} \lesssim \| f \|_{B^s_{p,
     q} (\Omega)} . \]
  Moreover, by the definition of the space, one has the bound for the
  restriction to $\Omega$
  \[ \| g|_{\Omega} \|_{B^s_{p, q} (\Omega)} \lesssim \| g \|_{B^s_{p, q}
     (\mathbb{R}^d)} \]
  for any $g \in B^s_{p, q} (\mathbb{R}^d) .$
\end{proposition}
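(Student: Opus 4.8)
The plan is to recognise this as V.~Rychkov's extension/restriction theorem and to reconstruct its two halves separately. The restriction bound is immediate from the very definition of $B^s_{p,q}(\Omega)$ as an infimum over extensions: for any $g \in B^s_{p,q}(\mathbb{R}^d)$ the distribution $g$ is itself an admissible competitor for $g|_{\Omega}$, so $\|g|_{\Omega}\|_{B^s_{p,q}(\Omega)} \leqslant \|g\|_{B^s_{p,q}(\mathbb{R}^d)}$. Hence the entire content lies in producing a single bounded linear extension operator $E_{\Omega}$ that works simultaneously for all $s, p, q$.

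First I would localise. Since $\partial\Omega$ is Lipschitz, it can be covered by (finitely many, in the bounded case relevant here) balls in each of which, after a rigid rotation, $\Omega$ coincides with a \emph{special Lipschitz domain} $\Omega_\phi \assign \{(x', x_d) : x_d > \phi(x')\}$ for an $L$-Lipschitz $\phi$, together with one interior patch. Choosing a smooth partition of unity $\{\eta_j\}$ subordinate to this cover, I would write $f = \sum_j \eta_j f$, extend the interior piece $\eta_0 f$ trivially by zero, and reduce each boundary piece to the model special-Lipschitz case using the diffeomorphism/rotation invariance and the pointwise-multiplier property of Besov spaces. This reduces everything to constructing the extension for $\Omega_\phi$.

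The heart of the argument, and the step I expect to be the main obstacle, is the construction for the special Lipschitz domain. The key geometric observation is that if $\Gamma \assign \{(y', y_d) : y_d > A|y'|\}$ is a cone of aperture $A > L$, then $y \in \Omega_\phi$ implies $y + \Gamma \subset \Omega_\phi$ (using $\phi(y'+w') \leqslant \phi(y') + L|w'|$). I would then use Rychkov's \emph{local-means} building blocks: a pair of Schwartz families $(\psi_j)_{j \geqslant 0}$ and $(\varphi_j)_{j \geqslant 0}$, with $\psi_j = 2^{jd}\psi_1(2^j \cdummy)$ and so on, each supported in the \emph{downward} cone $-\Gamma$, with arbitrarily many vanishing moments, and forming a Calderón reproducing system $g = \sum_{j \geqslant 0} \varphi_j * \psi_j * g$ on $\mathcal{S}'(\mathbb{R}^d)$. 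Because $\psi_j$ and $\varphi_j$ are supported in $-\Gamma$, the values at a point $x \in \Omega_\phi$ of both $\lambda_j \assign \psi_j * g$ and $\varphi_j * (\chi_{\Omega_\phi}\lambda_j)$ involve only values of $g$ inside $\Omega_\phi$; thus $\lambda_j|_{\Omega_\phi}$ is intrinsically determined by $f = g|_{\Omega_\phi}$, and I can \emph{define}
\[ E_{\Omega_\phi} f \assign \sum_{j \geqslant 0} \varphi_j * (\chi_{\Omega_\phi}\, \lambda_j). \]
The cone-support then yields $E_{\Omega_\phi} f|_{\Omega_\phi} = \sum_j (\varphi_j * \lambda_j)|_{\Omega_\phi} = f$, so $E_{\Omega_\phi}$ is genuinely an extension.

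Finally I would prove the bound $\|E_{\Omega_\phi} f\|_{B^s_{p,q}(\mathbb{R}^d)} \lesssim \|f\|_{B^s_{p,q}(\Omega_\phi)}$, simultaneously in all indices, in two moves. A synthesis (molecule-type) estimate bounds $\|\sum_j \varphi_j * h_j\|_{B^s_{p,q}(\mathbb{R}^d)}$ by the sequence norm $\|(2^{js} h_j)_j\|_{\ell^q(L^p)}$; applied with $h_j = \chi_{\Omega_\phi}\lambda_j$ it reduces matters to $\|(2^{js}\lambda_j)_j\|_{\ell^q(L^p(\Omega_\phi))}$. A matching analysis estimate --- precisely Rychkov's characterisation of the Besov norm by cone-supported local means --- identifies this sequence norm with the intrinsic $\|f\|_{B^s_{p,q}(\Omega_\phi)}$, again because $\lambda_j$ on $\Omega_\phi$ depends only on $f$. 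The delicate point throughout is that these analysis and synthesis bounds must hold for \emph{compactly (cone-)supported} kernels rather than the usual Fourier-localised Littlewood--Paley pieces, and that such kernels can be built as Schwartz functions with arbitrarily many vanishing moments while respecting the cone support; establishing this local-means calculus uniformly in $s, p, q$ is the genuinely technical core, and it is exactly what makes the cone trick deliver a universal extension operator.
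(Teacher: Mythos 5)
The paper offers no proof of this proposition at all---it is imported verbatim from Rychkov's work \cite{rychkov}, with the restriction bound noted as immediate from the infimum definition, exactly as you observe. Your sketch is a faithful and correct reconstruction of Rychkov's actual argument (localisation to special Lipschitz domains, the cone condition $y+\Gamma\subset\Omega_\phi$, the Calder\'on reproducing formula with cone-supported kernels having finitely many but arbitrarily many vanishing moments, the definition $E_{\Omega_\phi}f=\sum_j\varphi_j\ast(\chi_{\Omega_\phi}\,\psi_j\ast g)$ whose restriction to $\Omega_\phi$ is intrinsic, and the analysis/synthesis estimates), so there is nothing to compare against within the paper itself.
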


\begin{remark}
  By Proposition \ref{prop:ext} the results from Lemma \ref{lem:embedding} are
  still true on the space $B^s_{p, q} (\Omega) .$
\end{remark}

\bigskip
\noindent \textcolor{gray}{$\bullet$} I. Zachhuber --  , FU Berlin, D-14195 Berlin, Germany\\
{\it E-mail}: immanuel.zachhuber@fu-berlin.de
\end{document}